\documentclass[12pt,english]{smfartss}
\title[A first-order heory is stable iff its type space is simplicially contractible]{
	A first-order theory is stable iff its type space 
	is simplicially contractible } 
\thanks{We thank M.Bays for many helpful and useful discussions. The meaning in homotopy theory of the simplicial formula we use  was pointed out by V.Sosnilo.
Comments to be sent to either  \href{https://t.me/joinchat/GVRrKxbSO8EWehZYReTKeQ}{here} or
{\tt mi\!\!\!ishap\!\!\!p@sd\!\!\!df.org}. 
\\University of Haifa. This research was supported by ISF grant 290/19. 
These notes report on work and progress, check updates at \url{mishap.sdf.org/rfc22.pdf}}
\date{2013 
 Die Mathematiker sind eine Art Franzosen:  Redet man zu ihnen, so
 \"ubersetzen sie es in ihre Sprache, und dann ist es alsobald ganz etwas
 anderes. --- J.W. von Goethe. 
 Aphorismen und Aufzeichnungen. Nach den Handschriften des Goethe- und Schiller-Archivs hg. von Max Hecker, Verlag der Goethe-Gesellschaft,
  Weimar 1907,
  Aus dem Nachlass, Nr.~1005, Uber Natur und Naturwissenschaft. Maximen und Reflexionen.\\ What do you gain by pretending so ? \\ {\tt mi\!\!\!ishap\!\!\!p@sd\!\!\!df.org}. }
\usepackage{MnSymbol}
\DeclareMathAlphabet{\mathbb}{U}{msb}{m}{n}

\DeclareFontFamily{U}{rcjhbltx}{}
\DeclareFontShape{U}{rcjhbltx}{m}{n}{<->rcjhbltx}{}
\DeclareSymbolFont{hebrewletters}{U}{rcjhbltx}{m}{n}

\let\aleph\relax\let\beth\relax
\let\gimel\relax\let\daleth\relax

\DeclareMathSymbol{\aleph}{\mathord}{hebrewletters}{39}
\DeclareMathSymbol{\beth}{\mathord}{hebrewletters}{98}
\DeclareMathSymbol{\gimel}{\mathord}{hebrewletters}{103}
\DeclareMathSymbol{\daleth}{\mathord}{hebrewletters}{100}

\DeclareMathSymbol{\lamed}{\mathord}{hebrewletters}{108}
\DeclareMathSymbol{\mem}{\mathord}{hebrewletters}{109}
\DeclareMathSymbol{\ayin}{\mathord}{hebrewletters}{96}
\DeclareMathSymbol{\tsadi}{\mathord}{hebrewletters}{118}
\DeclareMathSymbol{\qof}{\mathord}{hebrewletters}{113}
\DeclareMathSymbol{\shin}{\mathord}{hebrewletters}{152}

\usepackage{smfthm-treplo}
\usepackage{vmargin}
\usepackage{wasysym}
\usepackage{caption}
\usepackage{pdfpages}
\usepackage[matrix, arrow,all,cmtip,color]{xy}
\usepackage{url}

\def\includegraphics[#1]#2{    \pdfximage width  \linewidth {#2} 
    \pdfrefximage\pdflastximage}
\newbox\TestBox
\def\Remove #1 {\setbox\TestBox=\hbox{#1}%
        \leavevmode\rlap{\vrule height 2.5pt depth-1.75pt width\wd\TestBox}%
	        \box\TestBox\ }





\newcommand{\bi}{\begin{itemize}}
\newcommand{\ei}{\end{itemize}}
\newcommand{\bd}{\begin{description}}
\newcommand{\ed}{\end{description}}

\def\lra{\longrightarrow}

\def\xra{\xrightarrow}

\def\sing{\operatorname{Sing}}

\usepackage{hyperref}
\usepackage{endnotes}

\def\rrt#1#2#3#4#5#6#7{\xymatrix{ {#1} \ar[r]^{} \ar@{->}[d]_{#2} & {#4} \ar[d]^{#5} \\ {#3}  \ar[r] \ar@{-->}[ur]^{#7}& {#6} }}

\def\lra{\longrightarrow}

\def\sSets{\operatorname{sSets}}

\def\Itp{\text{1-tp}}




\def\id{\operatorname{id}}


\def\llrra{\leftrightarrow}

\def\xra{\xrightarrow}


\def\Dop{\Delta^{\operatorname{op}}}
\def\Sets{\text{Sets}}
\def\Hom{\text{Hom}}

\def\PPhi{\ensuremath{\Filt}}

\newtheorem{propo}{Proposition}

\newtheorem{question}{Question}


\usepackage[ethiop,english]{babel}
\usepackage{ethiop}

 \newcommand{\ethi}{\selectlanguage{ethiop}}
 \selectlanguage{english}
 
\def\Filt{{\ethi\ethmath{wA}}}
\def\sFilt{s{{\ethi\ethmath{wA}}}}

\def\Sets{\text{Sets}}
\def\Hom{\text{Hom}}

\def\sPhi{\ensuremath{\sFilt}}

\def\PPhi{\ensuremath{\Filt}}

\def\PHi{\ethi\ethmath{wE}}
\def\sPHi{s\ethi\ethmath{wE}}
\def\skiip#1{}

\def\inv{^{-1}}





\def\bqqq{\begin{quote}}
\def\eqqq{\end{quote}}

\def\Aut{\operatorname{Aut}}
\def\Topp{\operatorname{Top}}

\def\tp{\op{tp}}

\def\op{\operatorname}

\def\aand{\op{\,\&\,}}
\def\SS{{\Bbb S}}
\def\TTop{\operatorname{\it Top}}
\def\sTop{\operatorname{\it sTop}}

\def\fSets{\operatorname{\it finiteSets}}

\def\sSets{\operatorname{\it sSets}}
\def\Cone{\operatorname{Cone}}

\def\HHom{\op{HHom}}

\def\nfSets{\operatorname{\it finiteSets}_{\neq\emptyset}}
\def\spfSets{\operatorname{\it sProFiniteSets}}
\def\sProFiniteSets{\spfSets}

\def\pr{\operatorname{pr}}
\def\const{\operatorname{const}} 
\def\tensor{\otimes} 

\def\Sing{\operatorname{Sing}}
\def\footnoteskipped#1{}
\def\CCC{\mathfrak C}
\def\BBB{\Bbb B}
\def\EEE{\Bbb E}

\def\BBB{\Bbb B} 
\def\AutCB{\Aut(\CCC/\!B)}
\def\Groups{\op{Groups}}
\def\Ssigma{\mathbb S}

\begin{document}
\selectlanguage{english} 
\catcode`\_=8\catcode`\^=7 \catcode`\_=8
\author{misha gavrilovich} 
\begin{abstract}
A definable type of a first-order theory is the same as 
a section (retraction) of the simplicial path space (decalage) of its space of types 
viewed as a simplicial topological space; as is well-known, in the category of simplicial sets
such sections correspond to homotopies contracting each connected component.
	Without the simplicial language this is stated in 
\cite[Exercise 8.3.3]{TZ},
which defines 
a bijection between the set of all $1$-types definable over a parameter set $B$, 
and the set of 
all ``coherent'' families of continuous sections $\pi_n:S^T_n(B)\to S^T_{n+1}(B)$
where $S^T_n(B)$ is the Stone space of types with $n$ variables
of the theory $T$ with parameters in $B$.


Thus the definition of stability ``each type is definable'' 
	says that
	{\em a first order theory is stable iff 
	its 
	space of types is simplicially contractible}, 
in the precise sense that
the simplicial type space functor $\SS^T_\bullet(B):\Delta^{op}\lra \Topp$, 
	$n\longmapsto \SS_{n+1}(B)$  fits 
	into a certain well-known simplicial diagram in the category of simplicial topological spaces
	which 
	does define contractibility for fibrant simplicial sets. 

	In this note we spell out this and similar diagrams representing notions in model theory
	such as a parameter set and a type,  a type being invariant, definable, and product of invariant types, 
	and give pointers to the same diagrams in homotopy theory.
\end{abstract}
\maketitle


In this note we rewrite several definitions in stability theory in terms of simplicial diagrams,
and give pointers to similar diagrams in homotopy theory. This note was started 
when we noted the simplicial language almost explicit 
in 
\cite[Exercise 8.3.3]{TZ}
which establishes a bijection between the set of all global types definable over a set $B$ 
and the set of all ``coherent'' families of continuous sections $\pi_n:\SS_n(B)\lra \SS_{n+1}(B)$, $n>0$,
where, as usual, $\SS_n(B)$ denotes the Stone space of $n$-types over $B$ 
of a theory $T$:
$$ \pi_n: r(y_1,...,y_n) \longmapsto \{\,\varphi(x,y_1,...,y_n) : \operatorname{d}_p \varphi \in r \}$$
If $p(x/\CCC)$ is a global type invariant over $B$, this map can be described 
in terms of product of types as $\pi_n: r(y_1,...,y_n) \longmapsto p(x)\tensor r(y_1,...,y_n)$

In the simplicial language, such a ``coherent'' family of continuous sections 
is precisely the lifting map in the following diagram in the category of simplicial topological spaces
or profinite sets, as explained in \ref{ExTrans}:
			\begin{equation}\begin{gathered}\label{SBlift0}\xymatrix{ & \SS_\bullet(B)\circ[+1]\ar[d]|{\operatorname{pr}_{2,3,..}} \\
			\SS_\bullet(B) \ar[r]|{\operatorname{id}} \ar@{-->}[ru]|{\pi_\bullet} & \SS_\bullet(B) }
			\xymatrix{ &  \mathfrak C_\bullet/\!\!\Aut(\mathfrak C/B) \circ[+1]\ar[d]|{\operatorname{pr}_{2,3,..}} \\
			\mathfrak C_\bullet/\!\!\Aut(\mathfrak C/B)  \ar[r]|{\operatorname{id}} \ar@{-->}[ru]|{\pi_\bullet} 
			& \mathfrak C_\bullet/\!\!\Aut(\mathfrak C/B) }\end{gathered} \end{equation}
Here $\SS_\bullet(B):n\longmapsto \SS_n(B)$ is the simplicial space of types of $T$
over a parameter set $B\subset \CCC$, 
and $[+1]:\Dop\lra\Dop, n\longmapsto n+1$ is the decalage shift endomorphism of $\Dop$ so that
$\SS_\bullet(B)\circ[+1]:n\longmapsto \SS_{n+1}(B)$ is what is called {\em the simplicial path space of $\SS_\bullet(B)$}.
The diagram on the left expands the diagram on the right:
$\CCC_\bullet:n\longmapsto \CCC^{n+1}$ is the simplicial set represented by the monster model $\CCC$ of $T$
where each $\CCC^{n+1}$ is equipped with the topology 
generated by the solution sets $\{ (x_0,...,x_n)\in \CCC^{n+1} : \CCC\models \varphi(x_0,..,x_n,b_1,..,b_m),m\geq 0, b_1,...,b_m\in B \}$ of formulas with parameters in $B$;
the quotient is taken by the diagonal action.
Dropping the continuity requirement (i.e.~considering this diagram in $\sSets$) 
leads to the notion of a {\em global type invariant over $B$}; in the category $\sPhi$ of simplcial filters it defines a notion similar to non-forking.
Two ``coherent'' families $\pi_n^p,\pi_n^q:\SS_n(B)\lra \SS_{n+1}(B)$ of sections can be composed in an obvious way
$\SS_n(B) \xra {\pi_n} \SS_{n+1}(B) \xra{\pi_n^q} \SS_{n+2}(B)$, and the composition 
is a coherent family of sections corresponding to the product $p(x)\tensor q(x)$ of types. 
In simplicial terms (\S\ref{sProduct}) you say that 
given liftings $\pi_\bullet^p,\pi_\bullet^q:\SS_\bullet(B)\lra \SS_\bullet(B)\circ[+1]$, 
form the composition $\SS_\bullet(B) \xra {\pi_\bullet^p} \SS_\bullet(B)\circ [+1] \xra{ \pi_\bullet^q[+1]}\SS_\bullet(B) \circ[+2]$.
Hence, a Morley sequence of an invariant type $p$ 
is the infinite composition 
$$\SS_\bullet(B) \xra {\pi_\bullet^p} \SS_\bullet(B)\circ [+1] \xra{ \pi_\bullet^p[+1]}\SS_\bullet(B) \circ[+2]\xra{\pi_\bullet^p[+2]}...$$

In $\sSets$ for fibrant simplicial sets
these diagrams define the notion of a homotopy contracting each connected component,
and in $\TTop$ correspond to factorisations $X\lra \operatorname{Cone}(X)\lra X$
or $X\lra \Ssigma X \lra X$ of $\id:X\lra X$
though the cone or suspension\footnote{
Recall $\Cone(X):=X\times [0,1]/X\times \{1\}$, and 
$\Ssigma(X):=X\star\{-1,1\}=X\times [-1,1]/\{\,X\times\{-1\},\,
X\times\{1\}\,\}$. 
Also, $\Ssigma S^n=S^{n+1}$ where $S^n$ denotes the $n$-th sphere, 
$\pi_n(\Ssigma^k(X),x)=\pi_{n+k}(X,x)$, $0\leqslant k\leqslant n$,
and, more generally, $[\Ssigma X, Y]=[X,\Omega Y]$ where $[-,-]$ denote the homotopy classes of maps, and 
$\Omega Y:=\Hom(S^1,Y)$ is the loop space of $Y$.}
for a connected nice enough space $X$; for $X$ not connected one needs
to take the disjoint union of cones, resp.~suspensions, of the connected components of $X$.
In the category $\sPhi$ of simplicial filters the same diagram captures the notion of convergence.

Recall that a theory is stable iff for any set (equiv., any model) 
 each $1$-type over the set is definable. 
Hence, 
in a certain precise sense given by the diagram (\ref{SBlift33}) below, 
\begin{quote}{\em a theory is stable iff its space of types over any set  
	is simplicially contractible}, in the precise sense that it fits into the following diagram 
	in the category $\sTop$ or its full subcategory $\spfSets$
\end{quote}
\begin{equation}\begin{gathered}\label{SBlift33}
	\xymatrix{ &  \SS_\bullet(B)\circ[+1]\ar[d]|{\pr_1\times \operatorname{pr}_{2,3,..}} \\
\const_\bullet  \SS_1(B)\times 	\SS_\bullet(B) \ar[r]|{\operatorname{id}} \ar@{-->}[ru]|{\pi_\bullet} & 
\const_\bullet \SS_1(B) \times \SS_\bullet(B) }\end{gathered}\end{equation}
Here $\const_\bullet \SS_1(B)$ denotes the constant functor 
$(\const_\bullet \SS_1(B))_n:=\SS_1(B) $. 

Unfortunately, it is not quite clear to us how fair is it to say that this diagram defines 
contractibility. Perhaps informally one may say that this diagram says that 
the space can be contracted to each of its points. 

Simplicially, a parameter set $A\subset \CCC$, or rather its complete diagram, 
resp.~an $1$-type over $A\subset \CCC$, can be described as a map in $\sSets$
from the simplicial set $|A|_\bullet:n\longmapsto A^{n+1}$ represented by $A$, 
to the space $\SS_\bullet(\emptyset)$ of types over the empty set,
resp.~to the decalage shifted space $\SS_\bullet(\emptyset)\circ[+1]$, see \S\ref{sParametersTypes}. 
	$$\xymatrix{ & \SS_\bullet(\emptyset)\circ [+1] \ar[d]|{pr_{2,3,..}} \\
	|A|_\bullet:= \Hom_{sets}(-,A) \ar[r]|(0.7){A\subset\CCC} \ar@{-->}[ur]|{p(x/A)}  & \SS_\bullet(\emptyset)
}\xymatrix{&  } 
\xymatrix@C=2.02cm{ |A|_\bullet:= \Hom_{sets}(-,A) \ar[d]\ar[r]|{p(x/A)}   & \SS_\bullet(\emptyset)\circ [+1] \ar[d]|{pr_{2,3,..}} \\
	\SS_\bullet(B)\ \ar[r]|(0.7){A\subset\CCC} \ar@{-->}[ur]|{}  & \SS_\bullet(\emptyset)}$$
\ \ \ \ \ \ \ \ \ \ \ \ \  \ \  a type over $A\subset\CCC$ \ \ \ \ \ \ \ \ \ \ \ \ \ \ \ \ \ \ \ \   \ \    \ \ \ \ \ \ \ \ \ a type over $A$ invariant over $B\subset A$
\subsubsection*{A little informal glossary of model theory vs topology} 
This leads to the following little very informal glossary of model theory vs topology:
notions on both sides fit into the same simplicial formulas.
Below $$\Cone_{c.c.}(X):=\bigsqcup\limits_{X_{c.c.}\text{ a connected component of }X} \Cone(X_{c.c.})$$
$$\Ssigma_{c.c.}(X):=\bigsqcup\limits_{X_{c.c.}\text{ a connected component of }X} \Ssigma(X_{c.c.})$$%
is the disjoint union of cones, resp.~suspensions, of the connected components of $X$.
This is well-defined in a useful way only for topological spaces ``nice'' enough. 

\begin{itemize}
\item stable theories --- contractible spaces
\item an invariant or definable type --- a homotopy 
	$\Cone_{c.c.}(X)\lra X$ or $h:\Ssigma_{c.c.} X \lra X$ contracting $\id:X\lra X$
\item product of invariant types $p(x)\tensor q(y)$ --- composition of homotopies 
	$$h^p\circ \Ssigma_{c.c.} h^q: \Ssigma_{c.c.}\Ssigma_{c.c.} X\lra X$$
\item a Morley sequence --- a sequence somewhat reminiscent of a specturum in stable homotopy theory
	$$h\circ \Ssigma_{c.c.} h \circ ...\circ\Ssigma_{c.c.}^{n-1}h: \Ssigma_{c.c.}^n X\lra X, n>0$$
\end{itemize}

Note that a standard advice (cf.~Remark~\ref{BGXasSB}) from homotopy theory would be to use a ``better'' 
standard construction of the quotient $\CCC_\bullet/\!\!\AutCB$, called
the classifying space or Borel construction of a group action.

Connection to stability theory arises if one considers the diagrams $(\ref{SBlift0}), (\ref{SBlift33})$
in the categories of simplicial topological spaces $\sTop$, profinite sets $\sProFiniteSets$, 
or filters $\sPhi$. 
The homotopy theory for simplicial topological spaces $\sTop$ and for simplicial 
profinite sets $\sProFiniteSets$  
is known; however, we were unable to understand how our diagrams 
relate to the notion of contracitibily/null-homotopy there.
In $\sPhi$ the same diagram defines the notion of convergence, but 
no theory of $\sPhi$ exists, only examples of reformulations of various basic
notions in topology, analysis, and model theory including a reformulation of stability as a lifting property \cite{Z1,Z2}. 

\subsubsection*{Structure of the paper} 
In \S\ref{bookkeeping} we sketch how to view simplicially sets of parameters and 
types over parameters: a set of parameters (resp.~a type)  or rather its complete diagram,
in a model of a theory
is a morphism from a representable set to the (resp., shifted) simplicial Stone space of the theory.
Following \cite[Exercise 8.3.3]{TZ}, invariance and definability of types 
are then interpreted as lifting diagrams (retractions) in $\sSets$ and $\sTop$.

In \S\ref{sec:ExTrans} we repeat in more detail some of \S\ref{bookkeeping} and 
explain in detail how to view 
\cite[Exercise 8.3.3]{TZ} in the simplicial language.
Care is taken so that \S\ref{sec:ExTrans} can be read independently. 

In \S\ref{TopSimpHomotopy} we explain that the simplicial formula (\ref{SBlift0}) defines the usual notion
of  {\em a homotopy contracting each connected component}, 
in the category of topological spaces
when applied to the singular complexes of sufficiently nice topological spaces.

Thus, in a certain precise sense, {\em a definable global type} is a homotopy contracting the simplicial Stone space of types. 
The product $p(x)\tensor q(x)$ of two global invariant types corresponds to a composition of such homotopie 
in $\sSets$, and thus the type of a Morley sequence
corresponds to iteratively composing in $\sSets$ an invariant type with itself shifted 
$...\pi_\bullet[n]\circ\pi_\bullet[n-1]\circ..\circ \pi_\bullet$.

Recall that a theory is stable iff each type over any set (equiv., any model)
is definable. Hence, (\ref{SBlift33}) in $\sTop$ says, in a certain precise sense, that 
\begin{quote}{\em a theory is stable iff its simplicial space of types over any parameter set $B$ 
	is contractible}
\end{quote}
In \S\ref{ConvergenceSimpHomotopy} we give a reference saying that the same formula 
defines the notion of convergence in the category of simplicial objects
of a category of filters. 

In \S\ref{TestProblems} we 
formulate hopefully easy problems which might be used to guide development of the simplicial reformulations
in model theory.

The reader may want to skip \S\ref{Questions} whose purpose is to formulate explicit requests 
for comments from readers, rather than be interesting in any way.
By including such a section, paraphrasing \cite{RFC3}, 
we hope to promote the exchange and discussion of
considerably less than authoritative ideas, and ease  
a natural
hesitancy to publish something unpolished for the sole purpose of requesting comments
and collaboration. 

\subsubsection*{Acknowledgements}
Will Johnson suggested looking at the simplicial sets of types, a suggestion I ignored even though I already had a half-baked characterisation 
of non-dividing using the simplicial Stone space in \cite{Z1}.
We thank David Blanc, Boris Chorny, Assaf Hasson, Kobi Peterzil, Ori Segel, and Andr\'es Villaveces for encouraging conversations.\\ 

\section{Simplicial language as bookkeeping names of the variables}\label{bookkeeping}

We demonstrate how to view parameters (rather, their complete diagrams) 
and types as morphisms in $\sSets$ or $\sTop$.

Essentially, {\em simplicial/functoriality is a way of bookkeeping  
	the names of variables or parameters} in a finitely consistent collection of formulas. 

\subsubsection*{Preliminaries: fixing simplicial notation}
Let $\Delta$ denote the category of non-empty finite linear orders 
denoted by $\{1<..<n\}$. Let $[+1]:\Delta\lra \Delta$ be the 
{\em decalage} endomorphism
adding a new least element to each finite liner order:
$$[+1]: \{1<..<n\} \longmapsto \{0<1<...<n\}$$
$$f:\{1<...<m\}\lra \{1<...<n\} \longmapsto f[+1](0):=0, f[+1](l)=:l$$ 
We denote the finite linear order $\{1<...<n\}$ either by $n^\leqslant$,
or by $[n-1]$, as is standard in simplicial literature. 
For a functor $X_\bullet:\Dop\lra \mathcal C$ in a category $\mathcal C$, 
inclusions $\{1<..<n\}\subset \{0<1<...<n\}$ induce maps 
$X_\bullet((n+1)^\leqslant) \lra X_\bullet(n^\leqslant)$, 
and these form a natural transformation we denote by
$\pr_{2,3,...}:X_\bullet\circ[+1]\lra X_\bullet$. 
Similarly, inclusions  $\{0\}\subset \{0<1<..<n\}$ 
induce maps 
$X_\bullet((n+1)^\leqslant) \lra X_\bullet(1^\leqslant)$, 
and these form a natural transformation we denote 
by 
$\pr_{1}:X_\bullet\circ[+1]\lra X(1^\leqslant)_\bullet$. 

{\em A simplicial object of a category $\mathcal C$} is by definition a functor $\Dop \lra \mathcal C$. 
They form a category usually denoted as $s\mathcal C$. We shall work with categories 
$\sTop$ of simplicial topological spaces, simplicial profinite sets $\spfSets$, and $\sSets$ of simplicial sets, and $\sPhi$ of simplicial filters (defined in \S\ref{sFdef}).

\subsection{Talking simplicially about parameters and types over them}
\label{sParametersTypes}

Fix a theory $T$ in a language $L$ and a ``monster'' 
model $\mathfrak C$ of $T$. Recall that ``monster'' here means
that we assume that $\mathfrak C$ is a model saturated and homogeneous with respect
to all ``small'' subsets; I think these assumptions imply (mean?)
that we can reconstruct $\CCC$ using the simplicial space $\CCC_\bullet/\Aut_L(\CCC/B)$ 
described below.   
For a subset $B\subset \mathfrak C$, 
let $\SS^T_n(B):=\mathfrak C^n / \Aut_L(\mathfrak C/B)$ denote 
the topological {\em Stone space of complete $n$-types over $B$}; 
we will often drop the superscript $T$. In model theory, orbits of $\Aut_L(\CCC/B)$ are referred to as {\em types}. 
Recall the topology on $\SS_n(B)$ %
is generated by open (and necessarily also closed)
subsets $U_\phi=\{\,p(\bar x)\in S_n(B): \phi(\bar x)\in p(\bar x)\}$,
where $\phi(\bar x)$ varies though all the  
formulas in $L$ with parameters in $B$. 
In other words, it is the weakest topology such that
each $L(B)$-formula defines a continuous  
$\Aut_L(\mathfrak C/B)$-invariant function $\SS_n(B)\lra \{0,1\}$
to the discrete two point set with the trivial action. 

As a set, the Stone space $\SS_n(B)=\CCC^n/\!\!\Aut_L(\CCC/B)$
is a quotient of $\CCC^n$ by $L$-autmorphisms fixing $B$ pointwise. 
We may equip $\CCC^n$ with a topology in an obvious way 
so that this equality holds in the category of topological spaces.

The spaces $\SS_n(B)$ form a functor $\SS_\bullet(B):\nfSets\lra \TTop$, 
$\{1,2,..,n\}\longmapsto \SS_n(B)$. It is a quotient of the 
functor $\CCC_\bullet := \Hom_\Sets(\{1,..,n\}, \CCC)= \CCC^n$
by $\Aut(\CCC/B)$ (however, note that here the topology on $\CCC^n$ 
is not the product topology).

As $\Dop\subset \fSets_{\neq\emptyset}$ is a subcategory, 
these functors restrict to $\Dop$, and thus can be considered
as objects of $\sTop$ and $\sSets$. In everything we say below 
about $\SS_\bullet(B)$, it does not matter which category the functor 
is defined on. 

\subsubsection{Parameters as morphisms to the simplicial Stone space}
For a set $A$, let $|A|_\bullet:=\Hom_\Sets(\{1,..,n\}, A)=|A|^n$
be the simplicial topological space represented by $A$;
here we equip $|A|^n$ with the discrete topology. 

Recall that {\em the complete diagram of a subset $A\subset M$ over parameters $B\subset M$} 
is the set of all 
	formulas with parameters in $B$
and variables indexed by elements of $A$, 
which became valid after replacing the variables by the corresponding elements of $A$.

	To give a complete diagram of a subset $A\cup B\subset M$, $A\neq\emptyset$, 
	of a model $M$ of theory $T$ 
is the same as to give a simplicial map in $\sSets$
	$$|A|_\bullet:= \Hom_{\Sets}(-,A) \xra{\tau_A} 
	\SS^T_\bullet(B)$$
	such that for $b\in A\cap B$ $\tau_A(b)=\{x_b=b\}$.

	Indeed, for each $n$ we get a map $A^n \lra S_n(B)$, 
	i.e.~we know/specify the complete type of each tuple in $A$ over $B$.
	Functoriality ensures that these types are coherent, i.e.~$\tp(ab/B)$ 
	does extend $\tp(a/B)$ and $\tp(b/B)$.

	Thus, {\em simplicial/functoriality is a way to keep bookkeeping (track) of 
	the names of variables or parameters} in a type. 

\subsubsection{Types as morphisms to the shifted (decalage) simplicial Stone space}
	To give a complete $1$-type $p(x/AB)$ over  a subset $A\cup B\subset M$, $A\neq \emptyset$, 
	is the same as to give a lifting in $\sSets$ 
	$$\xymatrix{ & \SS_\bullet(B)\circ [+1] \ar[d]|{pr_{2,3,..}} \\
	|A|_\bullet:= \Hom_{sets}(-,A) \ar[r]|(0.7){\tau_A} \ar@{-->}[ur]|{p(x/AB)}  & \SS_\bullet(B)}$$ 
	Indeed, for each $n$ we get a map $A^n \lra \SS_{1+n}(B)$, 
	i.e.~we know/specify a type $p(-,\bar a/B)$ for each finite tuple $\bar a\subset A$.

	To give a complete $N$-type $p(\bar x/AB)$ over  a subset $A\cup B\subset M$, $A\neq\emptyset$,
	is the same as to give a simplicial map in $\sSets$ 
	$$\xymatrix{ & \SS_\bullet(B)\circ [+N] \ar[d]|{pr_{N+1, N+2,,..}} \\
	|A|_\bullet:= \Hom_{sets}(-,A) \ar[r]|(0.7){\tau_A} \ar@{-->}[ur]|{p(\bar x/AB)}  & \SS_\bullet(B)}$$ 
	Indeed, for each $n$ we get a map $A^n \lra S_{N+n}(T)$, i.e.~we know/specify a type $p(-,-,..,-,\bar a/B)$ for each finite tuple $\bar a\subset A$.

	Thus again we see that {\em simplicial/functoriality is a way 
	to keep bookkeeping (track) of the names of variables}...

\subsection{Invariant and definable types} 

\subsubsection{Invariant types} \cite[Exercise 8.3.3]{TZ} says, as we explain below in \S\ref{ExTrans}, that 
a global type $p(\bar x/\CCC)$ {\em invariant over $B$} is the same as the following lifting diagram in $\sSets$: 
	$$\xymatrix@C=+2.39cm{ 
		& \SS_\bullet(B)\circ [+N] \ar[d]|{pr_{N+1,N+2,,..}} \\
	\SS_\bullet(B)  \ar@{-->}[ur] \ar[r]|{\id} & \SS_\bullet(B)}$$

To see this, consider the diagram\footnote{A simplicially minded reader may consider this diagram as a definition of a global type invariant over $B$.}
in $\sSets$
\begin{equation}\label{defInv}
	\xymatrix@C=+2.39cm{ |\CCC|_\bullet:= \Hom_{sets}(-,\CCC) \ar[r]|{p(\bar x/AB)} \ar[d]|{AB\subset\CCC}  
		& \SS_\bullet(B)\circ [+N] \ar[d]|{pr_{N+1,N+2,..}} \\
\SS_\bullet(B)  \ar@{-->}[ur] \ar[r]|{\id} & \SS_\bullet(B)}\end{equation}
The model $\CCC$ being saturated over $B$ means precisely that the map on the right is surjective at each level 
(i.e.~for each $n$ the map of $n$-simplicies is surjective). 
Hence, there is at most one lifting map, and we only need to check it is well-defined. 
The lifting map is well-defined iff 
for a tuple $\bar c\subset \CCC$, 
whether $\varphi(\bar x,\bar c)\in p(x/\CCC)$ depends only on the type $\tp(\bar c/B)$ of the parameters over $B$. 
This is precisely the definition of invariance over $B$.

It follows that a type $p(\bar x/AB)$ extends to a global type invariant over $B$ iff there is a lifting diagram in $\sSets$:
	$$\xymatrix@C=+2.39cm{ |A|_\bullet:= \Hom_{sets}(-,A) \ar[r]|{p(\bar x/AB)} \ar[d]|{AB\subset\CCC}  
		& \SS_\bullet(B)\circ [+N] \ar[d]|{pr_{N+1,N+2,..}} \\
	\SS_\bullet(B)  \ar@{-->}[ur] \ar[r]|{\id} & \SS_\bullet(B)}$$ 

\subsubsection{Definable types} Further, \cite[Exercise 8.3.3]{TZ} says, as we explain below, that 
a global type $p(x/\CCC)$ {\em definable over $B$} is the diagonal map above is continuous, i.e.
it is the same as the following lifting diagram in $\sTop$ or, equiv., in $\spfSets$: 
	$$\xymatrix@C=+2.39cm{ 
		& \SS_\bullet(B)\circ [+N] \ar[d]|{pr_{N+1,N+2,..}} \\
	\SS_\bullet(B)  \ar@{-->}[ur] \ar[r]|{\id} & \SS_\bullet(B)}$$ 
To see this, consider the diagram (\ref{defInv}) in $\sTop$. 
Continuity of the diagonal arrow says that for each  formula $\varphi(\bar x,\bar c)$ 
there is a formula $\operatorname d_p \varphi(\bar x,\bar c)$ such that for each tuple $\bar c \subset \CCC$ it holds 
$\varphi(\bar x,\bar c)\in p(\bar x/\CCC)$ iff $\operatorname d_p \varphi(\bar x,\bar c) \in \tp(\bar c/B)$.
This means precisely that 
$\operatorname d_p \varphi(\bar x,\bar a)$ is a $\varphi$-definition of $p$ over $B$.

It follows that a type $p(\bar x/AB)$ extends to a global type definable over $B$ iff there is a lifting diagram 
in $\sTop$ or, equiv.~its full subcategory $\spfSets$ of compact Hausdorff totally disconnected spaces
	$$\xymatrix@C=+2.39cm{ |A|_\bullet:= \Hom_{sets}(-,A) \ar[r]|{p(\bar x/AB)} \ar[d]|{AB\subset\CCC}  
		& \SS_\bullet(B)\circ [+N] \ar[d]|{pr_{N+1,N+2,..}} \\
	\SS_\bullet(B)  \ar@{-->}[ur] \ar[r]|{\id} & \SS_\bullet(B)}$$
Because of its importance we rewrite the diagram expanding the notation for the type(=orbit) space:
	$$\xymatrix@C=+2.39cm{ |A|_\bullet:= \Hom_{sets}(-,A) \ar[r]|{p(\bar x/AB)} \ar[d]|{AB\subset\CCC}  
		& \CCC_\bullet/\!\Aut(\CCC,B)\circ [+N] \ar[d]|{pr_{N+1,N+2,..}} \\
	 \CCC_\bullet/\!\Aut(\CCC,B)  \ar@{-->}[ur] \ar[r]|{\id} &  \CCC_\bullet/\!\Aut(\CCC,B)}$$ 

\subsection{Various remarks} We make a couple of remarks.

\subsubsection{A notion of definable or invariant type ``better'' for homotopy theory}

A standard advice to improve the notion of type by a homotopy theorist, is to replace 
the quotient $\CCC/\!\!\Aut(\CCC/B))$ by a ``better'' and ''standard'' quotient 
which remembers more, the classifying space $\BBB(G,X)$ of a group $G:=\Aut(\CCC/B)$ acting on 
a space $X$. 

I have not yet tried to interpret it, and there are immediate technical difficulties 
with the way I explain it below.

\begin{rema}[$\BBB_\bullet(G,X)$ instead of $\SS_\bullet(B)$]\label{BGXasSB} 
For a group $G$ acting on a set $X$, 
there is an obvious canonical 
map $$X\times G\times ... \times G \lra X \times X \times...\times X$$
	$$(x,g_1,...,g_n)\longmapsto (x,xg\inv_1,...,xg\inv_n)$$
invariant under the diagonal action 
	$$(x,g_1,...,g_n)\longmapsto (xg,g_1g,...,g_ng).$$

	Recall\footnote{The following slightly paraphrased 
	quote from \href{https://ncatlab.org/nlab/show/simplicial+classifying+space}{[nlab,Borel construction]} helps intuition:
For $X$ a topological space, $G$ a topological group and $\rho:G\times X \lra X$
a continuous $G$-action (i.e.~a topological $G$-space), 
	the Borel construction of $\rho$ is the topological space $X \times_G \EEE G$, 
hence quotient of the product of $X$ 
	with the total space of the $G$-universal principal bundle $\EEE(G)$  
	by the diagonal action of $G$ on both.\\
\,\,\,	Analogously, for $G_\bullet:\Dop\lra\Groups$ a simplicial group, 
	$X_\bullet:\Dop\lra \Sets$ a simplicial set,  and 
	$G_\bullet  \times X_\bullet \lra X_\bullet$ 
        a simplicial group action, its Borel construction is 
	the quotient
	$( X_\bullet \times \EEE_\bullet(G) )	/ G_\bullet$
in $\sSets$
of the Cartesian product of $X_\bullet$ 
	with the universal principal simplicial complex $\EEE G_\bullet$ \
	by the diagonal action of $G_\bullet$ on these.
	}
	that the simplicial {\em Borel construction 
	$\BBB_\bullet(G,X)=X\times_G \EEE_\bullet(G)$ of a 
	group action $\rho:G\times X \lra X$} 
	is defined (explicitly given) by
	$$\BBB_\bullet(G,X)(n^\leqslant) :=(X\times G^n) /G $$ 
and this space is viewed as a ``better'', ``right'' quotient of $X$ by $G$. 

	The above gives rise to a simplicial map 
	$\BBB_\bullet(\CCC,\AutCB)\lra \SS_\bullet(B)\circ[+1]$
	from the classifying space $\BBB_\bullet(\CCC,\AutCB)$.
	Note, however, that its image contains only tuples with all elements realising the same type.
	
	Thus, a standard advice of a homotopy theorist would be to replace $\SS_\bullet(B)$
	in the diagrams above 
	by something related to $\BBB(\CCC,\AutCB)$. 
	 In fact, perhaps it might be necessary to consider  $\BBB_\bullet(\CCC_\bullet,\AutCB_\bullet)$
	 for the simplicial group $\AutCB_\bullet$ where 
	 $\AutCB_\bullet(n^\leqslant):=\Aut(\CCC^n/\!B^n)$... 
\end{rema}
Does this advice make any sense ? I have not yet thought about it, and there are immediate technical difficulties ...

\subsubsection{Extending the parameter set of a type} A typical tool/problem in model theory 
is to extend/define {\em freely} a type to a larger parameter set. 
This corresponds to finding/defining a canonical way to define liftings
	$$
	\begin{gathered}
	\xymatrix{ & \SS_\bullet(B)\circ [+1] \ar[d]|{pr_{2,3,..}} \\
	|A|_\bullet \ar[r]|(0.4){A\subset\CCC} \ar@{-->}[ur]|{p(x/B)_{|A}?}  & \SS_\bullet(B)\\
		\text{  }& p(x/B)\text{ to }p(x/B)_{|A} }
		\xymatrix{ |A|_\bullet \ar[r]|(0.4){p(x/AB)} \ar[d] & \SS_\bullet(B)\circ [+1] \ar[d]|{pr_{2,3,..}} \\
	|A'|_\bullet \ar[r]|(0.4){A'\subset\CCC} \ar@{-->}[ur]|{p(x/AB)_{A'}?}  & \SS_\bullet(B)\\
		\text{ }&p(x/AB)\text{ to }p(x/AB)_{|A'}  }
\end{gathered}$$
	Note that if the type $p(x/AB)$ extends to a global $B$-invariant type, 
	extending $p(x/AB)$ to $p(x/A'AB)$ is provided by taking the composition
$|A'|_\bullet \lra \SS_\bullet(B) \lra \SS_\bullet(B)\circ [+1]$. 
	$$
	\begin{gathered}
		\xymatrix@C=+2.39cm{ & |A|_\bullet \ar[r]|(0.4){p(x/AB)} \ar[d] \ar[dl] & \SS_\bullet(B)\circ [+1] \ar[d]|{pr_{2,3,..}} \\
		|A'|_\bullet \ar[r]|(0.4){A'\subset\CCC} \ar@{-->}[urr]|(0.33){p(x/AB)_{A'}}  
		& \SS_\bullet(B)\ar[r]|{\id} \ar[ru]  & \SS_\bullet(B)\\\
		\text{ }&p(x/AB)\text{ to }p(x/AB)_{|A'}  }
\end{gathered}$$

\subsection{Product of invariant types, and Morley sequences}\label{sProduct} 

\subsubsection{Product of invariant types}
	Note that liftings 
$\pi^p_\bullet,\pi^q_\bullet:\SS_\bullet(B)\lra \SS_\bullet(B)\circ[+1]$
can be composed as 
	$$\SS_\bullet(B)\xra{\pi^q_\bullet} \SS_\bullet(B)\circ[+1]\xra{\pi^p_\bullet[+1]}
	\SS_\bullet(B)\circ[+2]$$
	This construction corresponds to the product $p(x)\tensor q(y)$ of invariant types \cite[2.2.1]{Simon}. 
	The product of types is transitive, i.e.~$p(x)\tensor (q(y)\tensor s(z))=(p(x)\tensor q(y))\tensor s(z)$: this corresponds to 
	$$\pi^s_\bullet\circ (\pi^q_\bullet\circ \pi^p[+1])[+1]=
	\pi^s_\bullet\circ \pi^q_\bullet[+1]\circ \pi^p[+2]=
	(\pi^s_\bullet\circ \pi^q_\bullet[+1])\circ \pi^p[+2]$$
	$$\SS_\bullet(B)\xra{\pi^s_\bullet} \SS_\bullet(B)\circ[+1]\xra{\pi^p_\bullet[+1]}
			        \SS_\bullet(B)\circ[+2]
				\xra{\pi^p_\bullet[+2]}                                                        \SS_\bullet(B)\circ[+3]
				$$


Recall \cite[2.2.1]{Simon} 
that {\em product  $p(x) \tensor p(y)$ of two $B$-invariant global types  
	$p(x/\CCC), q(y/\CCC) \in S(\mathfrak C)$} 
can be defined by the following property.

Given a formula $\varphi(x; y) \in L(C)$,
where $B \subset C \subset \mathfrak C$, 
it holds 
$\varphi(x,y)\in p(x) \tensor q(y)$  iff $\varphi(x; c)\in p$ 
for some (equiv., any) $c \in \mathfrak C$  
	with $c \models  q_{|C}$ .

\subsubsection{Morley sequence} The $n$-type of a {\em Morley sequence of an invariant type $p(x)$} 
	is given by $p^{\tensor n }(x):=p(x_1)\tensor ...\tensor p(x_n)$, for $n>0$.  
	Thus a Morley sequence corresponds to taking the self-composition 
	$$\SS_\bullet(B)\xra{\pi^p_\bullet} \SS_\bullet(B)\circ[+1]
	\xra{\pi^p_\bullet[+1]}	\SS_\bullet(B)\circ[+2]\lra ... 
	\xra{\pi^p_\bullet[+n-1]} \SS_\bullet(B)\circ[+n]\lra... $$
	Somewhat more precisely,
{\em a Morley sequence of 
of a global $B$-invariant type	$p(x/\mathfrak C)$ over $C$} 
	is the restriction to $C$ of the following sequence of global $B$-invariant types:
	$$
p\in \SS_1(\CCC),\,\,\pi^p_\bullet(p)\in \SS_2(\CCC),\,\,\pi^p_\bullet[+1]\circ \pi^p_\bullet(p)\in \SS_3(\CCC)\,\,,...,
\,\,\pi^p_\bullet[n]\circ ...\circ \pi^p_\bullet(p)\in\SS_{n+1}(\CCC),...$$ 
Indiscernability of a Morley sequence follows from  associativity:
we have that for each 
map $[i_1\!<...<\!i_m]:m^\leqslant \lra n^\leqslant$ we have 
$$	\pi^p_\bullet[n]\circ ...\circ \pi^p_\bullet(p)[i_1\!<...<\!i_m]=\pi^p_\bullet[m]\circ ...\circ \pi^p_\bullet(p)$$

The reader may wish to compare this with a model theoretic exposition \cite[2.2.1]{Simon}:
\newline\noindent\includegraphics[test]{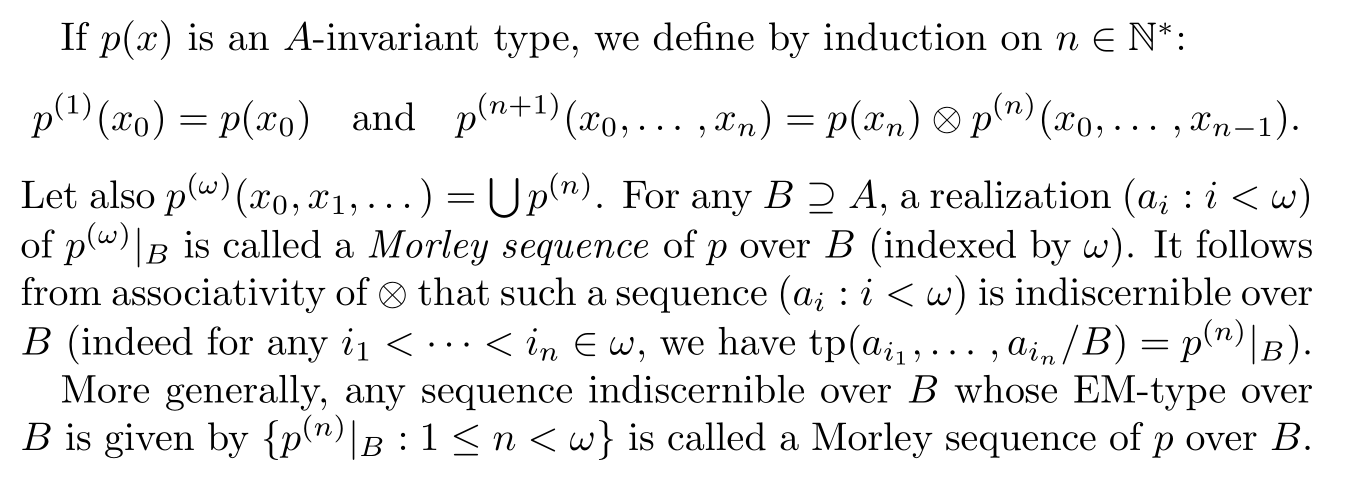}

\subsubsection{Generically stable types} 
	A permutation $\sigma:N\lra N$ acts on  $\SS_\bullet[+N]$ by permuting variables,
	$p(x_1,..,x_n,y_1,..,y_m)\longmapsto  p(x_{\sigma(1)},...,x_{\sigma(n)},y_1,..,y_n)$. 
A type is {\em generically stable} iff $p(x)\tensor p(y)=p(y)\tensor p(x)$, 
	i.e.~iff $\pi_\bullet^p[+1]\circ \pi^p_\bullet$ commutes with the permutation $\sigma:\SS_\bullet(B)\circ[+2]\lra \SS_\bullet(B)\circ[+2]$ permuting the two variables \cite[2.2.2,Theorem 2.29]{Simon}.
	In fact, if $p(x)$ is generically stable, this holds for any lifting $\pi_\bullet^q$ 
	\cite[2.2.2,Proposition 2.33]{Simon}.

\subsubsection{Product of types in a stable theory} 
	Recall that for a definable type $p(-/B)$ \cite[Def.~8.1.4]{TZ} 
	and any $L$-formula $\phi(\bar x,\bar y)$ with parameters in $B$ 
	\cite[Def.~8.1.4]{TZ} defines the formula $d_p\,\bar x \phi(\bar x, \bar b)$ by
\noindent\newline\includegraphics[test]{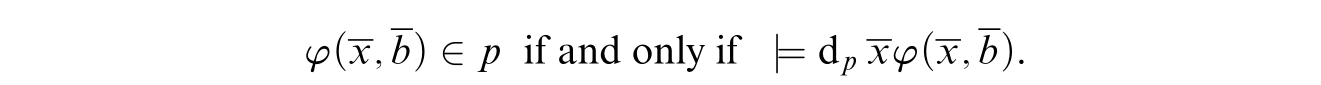}
In \cite{TZ}, the fact that in a stable theory $p(x)\tensor p(y)=p(y)\tensor p(x)$ is 
expressed as
\noindent\newline\includegraphics[test]{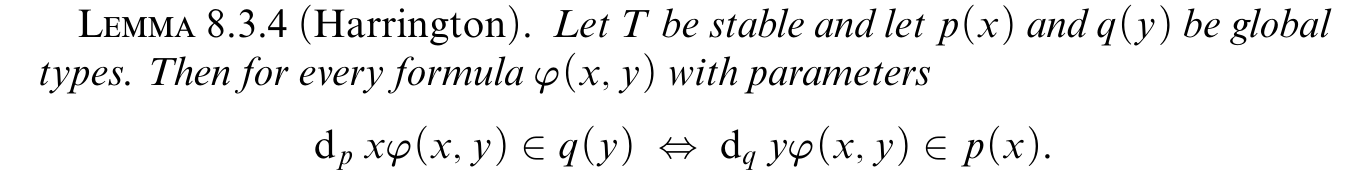}
In simplicial notation, this is represented by the following diagram:
$$
\xymatrix@C=+2.39cm{ &   \SS_\bullet (B) \circ[+1] \ar[r]|{\pi_\bullet^q[+1]} & \SS_\bullet (B) \circ[+2] \ar[d]^{(x,y,z,...)\longmapsto (y,x,z,...)} \\
\SS_\bullet (B) \ar[ru]|{\pi_\bullet^p} \ar[r]|{\pi_\bullet^q} \ar[drr]|{\id}  &   \SS_\bullet (B) \circ[+1] \ar[dr]|{\pr_{2,3,4,...}} \ar[r]|{\pi_\bullet^p[+1]} & \SS_\bullet (B) \circ[+2] \ar[d]^{\pr_{3,4,...}} \\
&&\SS_\bullet (B) }
$$
This corresponds to the following topological picture involving suspension (or cone): 
$$
\xymatrix@C=+2.39cm{ &   \SS_{c.c.}X   \ar@{<-}[r]|{\SS_{c.c.}\pi^q} & \SS_{c.c.}\SS_{c.c.}X   \ar@{<-}[d]^{(x,t_1,t_2)\longmapsto (x,t_2,t_1)} \\
X  \ar@{<-}[ru]|{\pi^p} \ar@{<-}[r]|{\pi^q} \ar@{<-}[drr]|{\id}  &   \SS_{c.c.}X \ar@{<-}[dr]|{} \ar@{<-}[r]|{\SS_{c.c.}\pi^p} & \SS_{c.c.}\SS_{c.c.}X  \ar@{<-}[d]^{} \\
&&X  }
$$

\subsubsection{Definability patterns (speculation)} It is tempting to view definability patterns as some kind of
	analogue of structure (e.g., compact-open topology) 
	on possible liftings $\SS_\bullet(M)\lra \SS_\bullet(M)\circ[+1]$, i.e., by analogy, on the set(space...) of 
	homotopies  $\Hom(\SS X, X)$ which are identity on $X$....
I cannot say more at this stage.


\subsection{Interpretations}

\subsubsection{Reducts as morphisms} 
A sublanguage $L_0\subset L$ and a subset $B_0\subset B$ of parameters
defines the obvious forgetful 
morphism $\SS_\bullet^{T}(B)\lra \SS_\bullet^{T(L_0)}(B_0)$
remembering only the $L_0(B_0)$-formulas of the types.

\subsubsection{Contractibility and Shelah's expansion by externally definable sets}
This map being contractible (i.e.~fitting into diagram (\ref{SBlift343}))
would mean that each $T(L_0)$-type over $B$ is definable in $L(B)$. 
Roughly, for $B=M$ a model, this means that  $T$ contains the Shelah expansion of $T_0:=T(L_0)$. 
\begin{equation}\begin{gathered}\label{SBlift343}
	\xymatrix{ &  \SS^{T(L_0)}_\bullet(M)\circ[+1]\ar[d]|{\pr_1\times \operatorname{pr}_{2,3,..}} \\
\const_\bullet  \SS^T_1(M)\times 	\SS^T_\bullet(M) \ar[r]|{\operatorname{id}} \ar@{-->}[ru]|{\pi_\bullet} & 
\const_\bullet \SS^{T(L_0)}_1(M) \times \SS^{T(L_0)}_\bullet(M) }\end{gathered}\end{equation}
If $T_0=T(L_0)$ is unstable,
and we take $B_0$ and $B$ to be large enough of the same cardinality,
this has to fail: there are more $T(L_0)$-types over $B_0$
than $T$-formulas over $B$.

\subsubsection{Proving non-intepretability using homotopy theory?} 

Hence, if methods of homotopy theory were able to prove that 
each map between certain simplicial (not contractible) type spaces is contractible,
then we perhaps were able to prove a non-interpretability result in model theory....

\subsubsection{Characterising interpretations simplicially?} 
I have nothing to say. \cite[Theorem 3.1]{Morley} characterises
simplicial type spaces arising from $L_{\omega_1\omega}$-theories
(without explicitly using the words ``functor'' or ``category''). 
See also \cite[\S3]{Levon} and \cite{Kamsma} for a modern exposition of type space functors
in a different context, especially
				\cite[\S3(The type space functor and interpretations of theories)]{Levon} 
				and \cite[Defs.~4.19-20]{Kamsma} which I have not yet read.

\subsection{Shelah's representability}
The meaning of a morphism between two generalised Stone spaces is reminiscent of the notion of
one structure {\em representing} another introduced by
Shelah  \href{http://mishap.sdf.org/Shelah_et_al-2016-Mathematical_Logic_Quarterly.pdf}{[CoSh:919]}
(we quote  \href{https://arxiv.org/pdf/1412.0421.pdf}{[Sh:1043]})
to `try to formalise the intuition 
that  ``the class of models of a stable first order theory is not much more complicated than the class
of models $ M=(A, \dots, E_t, \dots)_{s \in I } $                                                                                                             
where $E^M_t$ is an equivalence relation on $A$ refining $E^M_s$ for $s < t$ ; and  $I$ is a linear order of cardinality $\le |T|$''.'
In \cite[\S3.2.4]{Z1} we  reformulate a corollary of  a characterisation of stable theories in
\href{http://mishap.sdf.org/Shelah_et_al-2016-Mathematical_Logic_Quarterly.pdf}{[CoSh:919]}
and give a more literal formalisation of this intuition: a theory is stable iff there is $\kappa$
such that for each model of the theory there is a surjective morphism to its generalised Stone space
from a structure whose language consists of at most $\kappa$ equivalence relations and unary predicates (and nothing else).
Based on this reformulation we suggest a conjecture with a category-theoretic characterisation of classes of models of stable theories.

It will be interesting to compare this to \href{https://arxiv.org/abs/1810.01513}{[Boney, Erdos-Rado Classes, Thm~6.8]}.

\subsection{Stability as a lifting property}\label{sNOP}
In \cite[\S3.3.2]{Z1} we observe that stability can defined by a lifting property.
Let us very briefly sketch this observation adapted to our current context.
Let $I:=\Bbb Q$ be a countable dense linear order, and let 
$I_\bullet^\leqslant$ and $|I|_\bullet$ be the simplicial sets represented by the linear order, resp.~the set of its elements:
$$ I_\bullet^\leqslant: n\longmapsto \Hom_{preorders}(n^\leqslant, I)$$
$$ |I|_\bullet: n \longmapsto \Hom_{Sets}(n, I)=I^n$$

Recall that a theory is stable iff any infinite indiscernible sequence of $n$-tuples is necessarily 
an indiscernible set, for each $n>0$.
This definition is captured for $n=1$ in $\sSets$ by the following diagram (where we only consider horizontal arrows whose image has unbounded dimension):
$$\xymatrix@C=+5.66cm{  I_\bullet^\leqslant/\!\Aut(I^\leqslant) \ar[d] \ar[r]^{\text{unbounded dimension}} &  \SS_\bullet(\emptyset) \\
|I|_\bullet/\!\Aut(I) \ar@{-->}[ru] 
}$$
One way to turn this diagram into an actual lifting property is to consider it in the category 
$\sPHi$ of simplicial filters with continuous maps defined almost everywhere (see~Definition~\ref{sFdef}), 
equip $I$ with the filter of cofinite subsets, and equip each $I_\bullet^\leqslant(n)$ and 
$|I|_\bullet(n)$ with the coursest filter such that all the simplicial maps 
induced by $1^\leqslant\lra n^\leqslant$ are continuous.

\section{Transcribing Exercise 8.3.3 as simplicial diagram chasing}\label{sec:ExTrans}

We transcribe the Exercise 8.3.3 into the simplicial language. This section is self-contained
and may be read first. 

\subsection{Exercise 8.3.3 in model theoretic language}
We start by quoting in full the Exercise 8.3.3, its solution, 
and the (only) required definition of a definable type.
Fix a theory $T$ and a monster model $\mathfrak C$ of $T$. 
For a subset $B\subset \mathfrak C$, let $S_n(B):=\mathfrak C^n / \Aut_L(\mathfrak C/B)$ denote 
the space of complete $n$-types over $B$. Recall 
	the topology on $\SS_n(B)$ is generated by open sets
	$U_\phi=\{p(\bar x)\in S_n(B): \phi(\bar x)\in p(\bar x)\}$
	where $\phi$ varies though arbitrary formulas with parameters in $B$.

\includegraphics[test]{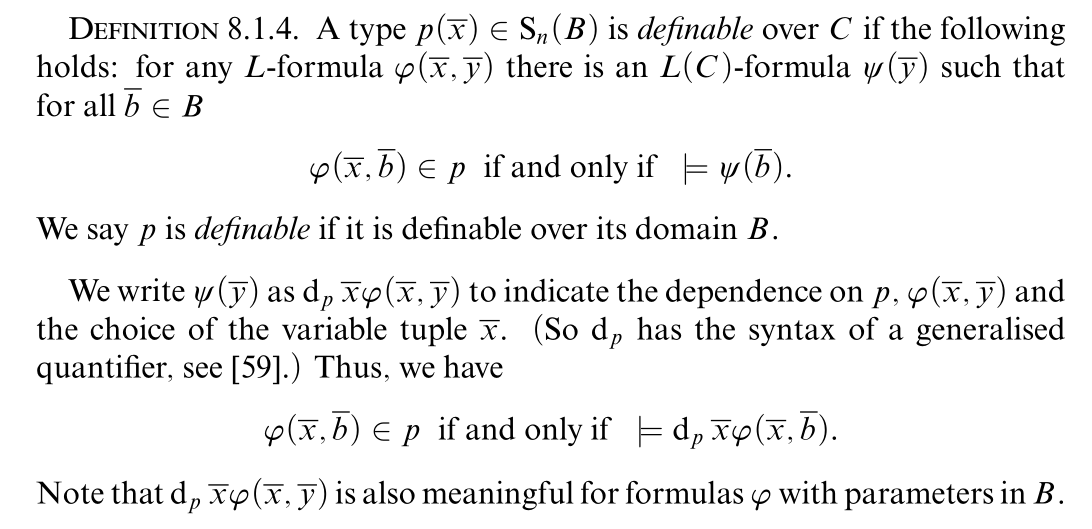}
\includegraphics[test]{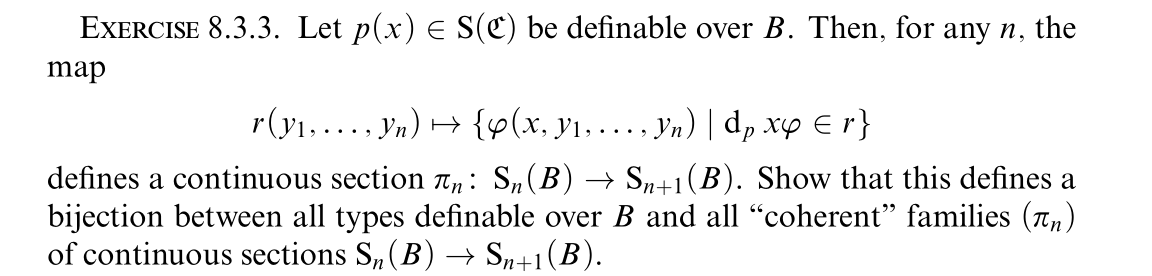}
\includegraphics[test]{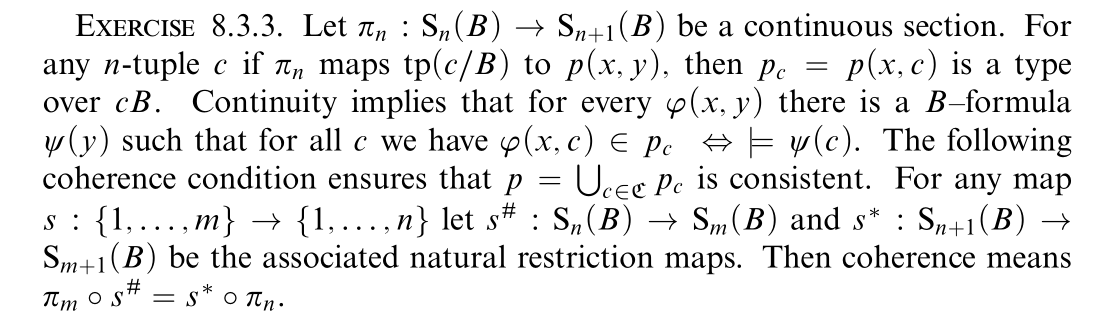}

\subsection{Exercise 8.3.3 in simplicial language}\label{ExTrans} 
Let $\Delta$ denote the category of non-empty finite linear orders 
denoted by $\{1<..<n\}$. Let $[+1]:\Delta\lra \Delta$ be the 
{\em decalage} endomorphism
adding a new least element to each finite liner order:
$$[+1]: \{1<..<n\} \longmapsto \{0<1<...<n\}$$
$$f:\{1<...<m\}\lra \{1<...<n\} \longmapsto f[+1](0):=0, f[+1](l)=:l$$ 
We denote the finite linear order $\{1<...<n\}$ either by $n^\leqslant$,
or by $[n-1]$, as is standard in simplicial literature. 

Let $\SS_\bullet(B):\Delta^{op}\lra \TTop$ be the functor which sends 
each finite linear order $\{1<...<n\}$ to $S_n(B)=\mathfrak C/\Aut_L(\mathfrak C/B)$. 
Recall that functors $\Delta^{op}\lra \TTop$ are called {\em simplicial objects of the category $\TTop$},
or sometimes {\em simplicial topological spaces}.
In fact, the functor  $\SS_\bullet(B):\Delta^{op}\lra \TTop$  factors 
though the embedding of the category $\Delta^{op}$ into the opposite 
of the category $\fSets_{\neq\emptyset}$ of non-empty finite sets,
and let $\tilde\SS_\bullet(B):\fSets_{\neq\emptyset}\lra \TTop$ 
be the corresponding functor.

A ``coherent'' family of sections $\pi_n:S_n(B)\lra S_{n+1}(B)$ 
defines a natural transformation $\tilde\SS_\bullet(B) \lra \tilde\SS_\bullet(B)\circ [+1]$. 
Indeed, the coherence condition $\pi_m\circ s^\#=s^*\circ \pi_n$ 
is precisely the defining property of a natural transformation
$\tilde\SS_\bullet(B) \lra \tilde\SS_\bullet(B)\circ [+1]$. 
In fact, because of symmetry it is equivalent to require the coherence conditions
only for non-decreasing maps $s:\{1,...,m\}\lra \{1,...,n\}$:
for any permutation $\sigma:\{1,...,m\}\lra\{1,...,m\}$, the equality 
$\pi_m\circ s^\#(q(y_1,...,y_n)=s^*\circ \pi_n(q(y_1,...,y_n)$
 is equivalent to 
$\pi_m\circ s^\#(q(y_{\sigma(1)},...,y_{\sigma(n)})=s^*\circ \pi_n(q(y_{\sigma(1)},...,y_{\sigma(n)})$. 
Hence, it is equivalent to say that a ``coherent'' family of sections $\pi_n:S_n(B)\lra S_{n+1}(B)$ 
defines a natural transformation $\SS_\bullet(B) \lra \SS_\bullet(B)\circ [+1]$. 

These coherence conditions is equivalent to the consistency of the global type 
$$p(x/\mathfrak C) =\bigcup\limits_{c\in\mathfrak C^n,n>0}p_{c}=\{\varphi(x,\bar c): \phi(x,\bar y)\in \pi_n(\tp(\bar c/B))\}$$
Indeed, consider a finite collection $\varphi_i(x,\bar c_i)\in p(x/\mathfrak C)$, $i<n$ of formulas.
Consider the type $\pi_N\tp(\bar c_1,...,\bar c_n/B)$ of the joint $N$-tuple $(\bar c_1,...,\bar c_n)$. 
For an appropriate map $s:\operatorname{length}(\bar c_i)^\leqslant \lra N^\leqslant$,  
the coherence conditions (=funtoriality) implies that 
$\pi_{\operatorname{length}(\bar c_i)}(\bar c_i)=\pi(s)(\pi_N(\bar c_1,...,\bar c_n))$,
hence $\varphi_i(x,\bar c_i)\in \pi_N(\bar c_1,...,\bar c_n)$, which is consistent.

By construction, the global type $p(x/\mathfrak C)$ is $B$-invariant.

Hence, there is a bijection between global $B$-invariant types
and ``coherent'' families of (possibly discontinuous) sections $(\pi_n)$,
or, equivalently, (possibly discontinuous)  sections $ \SS_\bullet(B)\lra \SS_\bullet(B)\circ[+1]$
of simplicial sets. 

Therefore, we can reformulate Exercise 8.3.3 as follows:

\begin{quote}
{\sc Exercise 8.3.3}. Let $p(x)\in S(\mathfrak C)$ 
	be definable over $B$. 
Then, for any $n$, the map
$$r (y_1, . . . , y_n )\longmapsto \{\phi(x, y_1, . . . , y_n ) | d_p\, x\phi \in r\}$$
or, equivalently, 
$$r \longmapsto p\tensor r$$
defines a continuous section  $\pi_n : S_n (B)\lra S_{n+1} (B)$. 
Show that this defines a bijection between 
	\begin{itemize}\item all types definable over $B$ 
			\item all ``coherent'' families $(\pi_n)$
of continuous sections $S_n (B)\lra S_{n+1} (B)$.
\item 
lifting arrows in the diagram of simplicial topological spaces
			\begin{equation}\begin{gathered}\label{SBlift}\xymatrix{ & \SS_\bullet(B)\circ[+1]\ar[d]|{\operatorname{pr}_{2,3,..}} \\
			\SS_\bullet(B) \ar[r]|{\operatorname{id}} \ar@{-->}[ru]|{\pi_\bullet} & \SS_\bullet(B) }
			\xymatrix{ &  \mathfrak C_\bullet/\!\!\Aut(\mathfrak C/B) \circ[+1]\ar[d]|{\operatorname{pr}_{2,3,..}} \\
			\mathfrak C_\bullet/\!\!\Aut(\mathfrak C/B)  \ar[r]|{\operatorname{id}} \ar@{-->}[ru]|{\pi_\bullet} 
			& \mathfrak C_\bullet/\!\!\Aut(\mathfrak C/B) }\end{gathered}
	\end{equation}
	\end{itemize}
Moreover,  
show that this defines a bijection between 
	\begin{itemize}\item all global types invariant over $B$ 
			\item all ``coherent'' families $(\pi_n)$
of possibly discontinuous sections $S_n (B)\lra S_{n+1} (B)$.
\item 
lifting arrows in the diagram of simplicial sets
			\begin{equation}\begin{gathered}\label{SBlift2}\xymatrix{ & \SS_\bullet(B)\circ[+1]\ar[d]|{\operatorname{pr}_{2,3,..}} \\
			\SS_\bullet(B) \ar[r]|{\operatorname{id}} \ar@{-->}[ru]|{\pi_\bullet} & \SS_\bullet(B) }\end{gathered}\end{equation}
	\end{itemize}
	\end{quote}
Note that by \cite[2.2,Example 2.17]{Simon} any finitely consistent global type over $B$ is 
necessarily $B$-invariant. As every type over a model is necessarily finitely consistent,
it implies that each global type over a model gives rise to a diagram (\ref{SBlift2}) in $\sSets$.
Hence, a notion of continuity is essential to be able to define definability. 

\section{Background for a homotopy theoretic interpretation of Exercise 8.3.3}

It is well-known that the simplicial formula (\ref{SBlift}) defines
homotopy triviality in $\sSets$, as was pointed to us by V.Sosnilo. 
Note that the diagram (\ref{SBlift})  can be written in the category 
of simplicial objects of an arbitrary category.  

Unfortunately, we understand very little about this formula,
and request comments on it from our homotopy theory readers.

Below in \S\ref{TopSimpHomotopy} we explain that the formulas (\ref{SBlift}) and (\ref{SBlift33}) defines the usual notion
of a map being contractible in the category of topological spaces
when applied to the singular complexes of sufficiently nice topological spaces.

In \S\ref{ConvergenceSimpHomotopy} we say that the same formula 
defines the notion of convergence in the category of simplicial objects
of a category of filters.

\subsection{Simplicial homotopy in the category of topological spaces}\label{TopSimpHomotopy} 
Indeed, in the category $\TTop$ of topological spaces, 
	a homotopy
	contracting a space $F$ in a space $X$ 
	to  a point (i.e.~a map $h:F\times [0,1]/F\times \{1\} \lra X$
	from the cone of $F$ to $X$), gives rise
	to a map $$h_\bullet: \sing_\bullet F \lra \sing_\bullet X[+1]$$
	of singular complexes lifting the map $(h_{|F\times\{0\}}) _\bullet: \sing_\bullet F \lra \sing_\bullet X$, defined as follows.
	This map  
		takes each 
	 $\delta: \Delta^n \to F$ in $\sing_\bullet F((n+1)^\leqslant)$ to
		$h_*(\delta):\Delta^n\times [0,1]/\Delta^n\times\{1\}\to X$ in $\sing_\bullet X((n+2)^\leqslant)$ defined by
		$h_*(\delta)( x,t ):= h(\delta(x),t)$.
	\begin{equation}\begin{gathered}\label{SingLift}
		\xymatrix
		{
			 {} && {\sing_\bullet X\circ[+1]}\ar[dd]|{pr_{2,3,...}}\\ \\
			 {\sing_\bullet F}\ar@{-->}[uurr] |{h _\bullet}  \ar[rr]|{} && {\sing_\bullet X}
		 }
	\end{gathered}\end{equation}

By a nice topological space we mean any class of spaces such a map $h:F\lra X$ is contractible (=null-homotopic) iff it is weakly contractible; Whitehead's theorem says that CW-spaces are nice in this sense. 

	\begin{propo} If $F$ and $X$ are nice, 
		and $F$ is connected, 
	than a map $h_0:F\lra X$ factors though 
the cone of $F$ as 
	$$F \lra F\times  [0,1]/F \times\{1\}\lra X
$$
iff the induced map $\sing_\bullet X \lra \sing_\bullet X$ of singular complexes 
	factors through the decalage $\pr_{2,3,..}:\sing_\bullet X\circ[+1]\lra \sing X _\bullet$.
	\end{propo}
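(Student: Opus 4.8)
The plan is to show the two factorisation conditions are equivalent by identifying each with a null-homotopy of $h_0$ (weak on the simplicial side, strict on the topological side), so that the proposition reduces to a chain of equivalences. Throughout write $(h_0)_\bullet:=\sing_\bullet h_0:\sing_\bullet F\to\sing_\bullet X$ for the bottom arrow of (\ref{SingLift}). I would dispatch the forward direction first, as it is elementary and uses neither niceness nor connectedness: given a factorisation $F\to\Cone(F)\xra{\bar h}X$, the construction spelled out just before the statement sends a singular simplex $\delta:\Delta^n\to F$ to the map $\Cone(\Delta^n)=\Delta^{n+1}\to X$, $(x,t)\mapsto\bar h(\delta(x),t)$. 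One checks this is natural, hence a simplicial map $\sing_\bullet F\to\sing_\bullet X\circ[+1]$, and that post-composing with $\pr_{2,3,..}$ recovers $(h_0)_\bullet$: the projection omits the new least vertex, which is the cone apex coming from the collapsed copy $\Delta^n\times\{1\}$, leaving the base face $\Delta^n\times\{0\}$ on which $\bar h$ restricts to $h_0$.

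For the converse I would argue homotopically, and this is where the hypotheses enter. The claim is the chain: a lift of $(h_0)_\bullet$ through $\pr_{2,3,..}:\sing_\bullet X\circ[+1]\to\sing_\bullet X$ exists $\iff$ $(h_0)_\bullet$ is simplicially null-homotopic contracting each connected component $\iff$ $h_0$ is weakly null-homotopic $\iff$ $h_0$ is null-homotopic $\iff$ $h_0$ extends over the cone. The first equivalence is the reading of the décalage formula (\ref{SBlift}) recalled above (Sosnilo), applicable since $\sing_\bullet X$ is a Kan complex. The second uses that realisation and $\sing$ form a Quillen equivalence with counit weak equivalences $|\sing_\bullet F|\to F$ and $|\sing_\bullet X|\to X$, so that simplicial homotopy classes $[\sing_\bullet F,\sing_\bullet X]$ match weak homotopy classes of maps $F\to X$; under this correspondence $\sing_\bullet h_0$ is null-homotopic iff $h_0$ is weakly so. The third equivalence is exactly the definition of niceness of $F$ and $X$ adopted above (null-homotopic $\iff$ weakly null-homotopic). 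The last is the standard fact that a map extends over $\Cone(F)=F\times[0,1]/F\times\{1\}$ iff it is null-homotopic.

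Connectedness of $F$ is what makes ``null-homotopic'' unambiguous along this chain. A décalage lift only contracts each connected component of $\sing_\bullet F$ separately, whereas a cone factorisation collapses all of $F\times\{1\}$ to a single apex and so homotopes $h_0$ to one global constant; these two agree precisely when $\sing_\bullet F$, equivalently $F$, is connected. For disconnected $F$ one would instead match the lift against $\Cone_{c.c.}$, the disjoint union of cones over the connected components, as in the glossary.

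I expect the main obstacle to be the middle equivalence: making precise that the simplicial null-homotopy produced by a lift corresponds, under geometric realisation and the weak equivalences $|\sing(-)|\simeq(-)$, to a genuine weak null-homotopy of $h_0$ \emph{as a map of spaces}, and conversely, with basepoints and $\pi_0$ tracked correctly. This is the step carrying the comparison between $\sSets$ and $\TTop$ homotopy theory; the remaining ingredients — the explicit cone construction, the extension-over-cone criterion, and the passage from weak to strict null-homotopy via niceness and Whitehead's theorem — are standard.
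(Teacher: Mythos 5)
Your proposal is correct in substance and shares the paper's overall skeleton --- the forward direction is the same explicit construction $\delta\mapsto h_*(\delta)$, $h_*(\delta)(x,t):=h(\delta(x),t)$, and both converses terminate in ``weakly null-homotopic plus nice implies null-homotopic implies extends over the cone'' via Whitehead. Where you genuinely diverge is the middle of the converse. The paper argues by hand: a lift $h_\bullet$ sends each singular simplex $\delta:\Delta^n\to F$ to a simplex of $X$ restricting to $h_0\circ\delta$ on $\Delta^n\times\{0\}$, so each $h_0\circ\delta$ factors through the cone of $\Delta^n$; a functoriality check then shows the same for maps from spheres $\partial\Delta^{n+1}$, which is weak contractibility directly. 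You instead black-box this step through two pieces of general machinery: the identification of a d\'ecalage lift with a simplicial null-homotopy on each component (using that $\sing_\bullet X$ is Kan), and the Quillen equivalence $|\sing(-)|\simeq(-)$ to transport it to a weak null-homotopy of $h_0$ in $\TTop$. Your route buys conceptual clarity and makes explicit exactly which standard theorems carry the weight --- which is valuable here, since the paper itself admits it ``understands very little'' about the d\'ecalage formula and your chain pins down precisely what must be true of it. The paper's route buys self-containedness: it never needs the Kan condition or the model-category comparison, only the elementary observation that cones of simplices are simplices of one higher dimension. You also correctly isolate the role of connectedness (one global apex versus one apex per component), which the paper only addresses implicitly by passing to $\Cone_{c.c.}$ in the following proposition. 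Be aware that your first equivalence, as stated, is the one step neither you nor the paper fully proves; if you want your argument to stand alone you should either prove that a section of $\pr_{2,3,\ldots}$ over $\sing_\bullet F$ yields a simplicial homotopy to a componentwise-constant map (the extra-degeneracy contraction of the d\'ecalage onto $\sk X_0$), or fall back on the paper's direct sphere argument at that point.
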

	\begin{proof}
Recall  that the singular complex is defined using simplices
$$\Delta^n=\Hom_{\text{preorders}}([0,1]^\leqslant, (n+1)^\leqslant)$$ as ``test spaces'':
\begin{center}
	$\sing_\bullet F((n+1)^\leqslant):=\Hom_\text{Top}( \Delta^n, F)$,\\
	$\sing_\bullet X((n+1)^\leqslant):=\Hom_\text{Top}( \Delta^n, X)$, \\
	$\sing_\bullet X\circ[+1]((n+1)^\leqslant)=\Hom_\text{Top}( \Delta^n\times [0,1]/{\Delta^n\times\{1\}}, X)$\\
\end{center}
where $n\geqslant 0$ and  $ \Delta^n\times [0,1]/{\Delta^n\times\{1\}}$ is
the cone of $n$-simplex $\Delta^n$.

		To define  a lifting $h_\bullet$ 
		given a map $h:F\times [0,1]/F\times \{1\} \lra X$, 
		take each $\delta: \Delta^n \to F$ in $F_\bullet((n+1)^\leqslant)$ to
		$h_*(\delta):\Delta^n\times [0,1]/\Delta^n\times\{1\}\to X$ in $X_\bullet((n+2)^\leqslant)$ defined by
		$h_*(\delta)( x,t ):= h(\delta(x),t)$.

		To see the other direction, note that 
		a map 
		$h_\bullet:F_\bullet\lra X_\bullet[+1]$ takes a singular simplex
		$\delta:\Delta^n\lra F$ of $F$ 
		into 
		a singular simplex $h_\bullet(\delta):\Delta^{n+1}=\Delta^n\times [0,1]/\Delta^n\times\{1\}\lra X$ of $X$ 
		such that
		$\delta=\pr_{2\!<\!3\!<..\!<\!n} h_\bullet(\delta)$, i.e.
                $\delta=h_\bullet(\delta)_{|\Delta^n\times \{0\}}$, 
		and thereby
		each $\delta:\Delta^n\lra F\lra X$ factors through the cone of $\Delta^n$.
		A verification using functoriality shows that the same factorisation holds for
		a map $\delta':\mathbb S^n = \partial \Delta^{n+1}$ from
		any connected sphere $\mathbb S^n = \partial \Delta^{n+1}$, $n>0$, 
		which means exactly that $h_0$ is weakly contractible,
		and for nice topological spaces  contractible and weakly contractible are equivalent.
\end{proof}

Let $\const_\bullet F$  denote the constant functor $\Delta^{op}\lra \TTop$,
$\const_\bullet F(n^\leqslant):=F$ for all $n>0$.

	\begin{propo} Assume topological spaces $F$ and $X$ are nice. 

	A map $h_0:F\lra X$ is homotopic to a constant map, i.e.~factors though 
the cone of $F$ as 
	$$F \lra F\times  [0,1]/F \times\{1\}\lra X
$$
iff the induced map $\const_\bullet F \times \sing_\bullet F \lra \const_\bullet X \times \sing_\bullet X$ of singular complexes 
	factors through the decalage $\pr_{2,3,..}:\sing_\bullet X\circ[+1]\lra \const_\bullet X \times 
\sing X _\bullet$.
	\begin{center}$
		\xymatrix
		{
			 {} && {\sing_\bullet X\circ[+1]}\ar[dd]|{\pr_0 \times  \pr_{2,3,...}}\\ \\
			 {\const_\bullet F \times \sing_\bullet F}\ar@{-->}[uurr] |{h _\bullet}  
			 \ar[rr]|{} && 
			 {\const_\bullet X \times \sing_\bullet X}
		 }
	$\end{center}

	\end{propo}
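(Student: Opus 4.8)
The plan is to reduce to the previous proposition applied componentwise, using the extra factor $\const_\bullet F$ and the projection $\pr_0$ to record a single common contraction point. Throughout, recall that the cone $\Cone(\Delta^n)=\Delta^n\times[0,1]/(\Delta^n\times\{1\})$ has base $\Delta^n\times\{0\}$ and one apex, and that under the decalage identification $\sing_\bullet X\circ[+1]((n+1)^\leqslant)=\Hom_{\TTop}(\Cone(\Delta^n),X)$ the map $\pr_{2,3,\ldots}$ is restriction to the base while $\pr_0$ is evaluation at the apex. The bottom horizontal map is the one induced by $h_0$, namely $\sing_\bullet h_0$ on the $\sing_\bullet F$ factor and $\const_\bullet h_0$ on the $\const_\bullet F$ factor.

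First I would treat the forward direction. Given a homotopy $H\colon F\times[0,1]\lra X$ with $H_0=h_0$ and $H_1$ the constant map at some $x_0$, I set, for a pair $(f,\delta)\in F\times\Hom_{\TTop}(\Delta^n,F)$, the simplex $h_\bullet(f,\delta)\colon\Cone(\Delta^n)\lra X$ to be the concatenation that runs $H(\delta(-),-)$ from $h_0\circ\delta$ to the constant $x_0$ on $[0,1/2]$ and then the reversed path $H(f,2-2(-))$ from $x_0$ to $h_0(f)$ on $[1/2,1]$. The second half is independent of the $\Delta^n$-coordinate, so it descends to the collapsed apex and gives apex value $h_0(f)$, while the base is $h_0\circ\delta$; thus $h_\bullet$ lifts the bottom map. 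Naturality is immediate, since $f$ is inert under all simplicial operators and the $\delta$-dependence is only through $\delta(-)$, so $h_\bullet(f,\delta\circ\alpha)=h_\bullet(f,\delta)\circ\Cone(\alpha)$.

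The substance is the reverse direction, where $F$ is allowed to be disconnected. From a lifting $h_\bullet$ I would first read off the $0$-simplices: for points $f,f'\in F$ the simplex $h_\bullet(f,f')\colon\Cone(\Delta^0)=[0,1]\lra X$ is a path from $h_0(f')$ to $h_0(f)$, so $h_0(F)$ lies in a single path component $X_0$ of $X$. Next, fixing a basepoint $f_\ast\in F$ and restricting $h_\bullet$ to $\{f_\ast\}\times\sing_\bullet F$ produces a section of $\pr_{2,3,\ldots}$ over $\sing_\bullet h_0$ all of whose apex values equal the single point $h_0(f_\ast)$. Applied to each connected component $F_i$ of $F$ separately, the previous proposition then shows that each $h_0|_{F_i}$ is null-homotopic; because all apices coincide at $h_0(f_\ast)\in X_0$, these componentwise null-homotopies can be chosen to end at the common point $h_0(f_\ast)$ and hence assemble into a single homotopy $F\times[0,1]\lra X$ from $h_0$ to the constant map $c_{h_0(f_\ast)}$.

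The main obstacle I anticipate is precisely this last assembly step for disconnected $F$: turning the abstract decalage lifting into componentwise contracting data and then gluing those to a common apex. Two points need care. First, the previous proposition only yields weak contractibility of each $h_0|_{F_i}$, so I must invoke niceness of $F$ and $X$ (Whitehead) to upgrade to an honest homotopy, component by component. Second, I must verify that the common apex recorded by $\pr_0$ really permits the per-component homotopies to be chosen with the same endpoint $h_0(f_\ast)$ and then patched across the disjoint union without clashing; this is exactly where the $\const_\bullet F$ factor does its work, and it is the step I would write out most carefully.
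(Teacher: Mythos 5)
Your argument is correct, but note that the paper states this proposition \emph{without any proof} --- it appears immediately after the connected-case proposition (the only one proved) and is evidently meant to be its extension to disconnected $F$, with the $\const_\bullet F$ factor and $\pr_0$ recording a common apex. Your proof supplies exactly that intended reduction, and both delicate points are handled correctly. In the forward direction the concatenation trick (contract $h_0\circ\delta$ to $x_0$, then run the track of $f$ backwards) is genuinely needed, since the lifting condition forces the apex of $h_\bullet(f,\delta)$ to be $h_0(f)$ rather than the endpoint $x_0$ of the given null-homotopy; your verification that the second half is independent of the $\Delta^n$-coordinate (so the map descends to the collapsed apex) and that naturality holds is what makes this work. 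In the reverse direction, the $0$-simplices of the lifting give paths from $h_0(f_\ast)$ to every $h_0(f')$, which is where the extra $\const_\bullet$ factor earns its keep: it pins all the componentwise contractions (obtained from the preceding proposition applied to each $\sing_\bullet F_i\subset\sing_\bullet F$) into a single path component, so they can be rerouted to the common endpoint $h_0(f_\ast)$ and glued. Two small remarks: the preceding proposition already concludes an honest factorisation through the cone (its proof internally invokes Whitehead), so you need not re-upgrade from weak contractibility yourself; and the gluing of the componentwise homotopies into one continuous map on $F\times[0,1]$ uses that for nice ($CW$) spaces the connected components are open and $F$ is their topological disjoint union --- worth stating explicitly, since for arbitrary spaces this step fails.
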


In particular, a nice (possibly disconnected) space $X$ is contractible iff 
			$$
			\xymatrix@C=+2.39cm{ &   \Sing_\bullet X\circ[+1] \ar[d]^{\pr_{2,3,...}} \\
\const_\bullet X \times \Sing_\bullet X  \ar@{-->}[ru]|{} \ar[r]|{\id} & \const_\bullet X \times \Sing_\bullet X } 
$$

\subsection{Convergence as being contractible}\label{ConvergenceSimpHomotopy} 
In \cite{L} we observe we associate with a sequence $(a_i)_i$ of points of 
a topological 
space $X$ a morphism in the category 
$\sPhi$ of simplicial objects in the category $\PPhi$ of filters (cf.~Definition~\ref{sFdef})  
such that it factors as in formula (\ref{SingLift}) iff the sequence is convergent; 
moreover, limits of the sequence correspond precisely to the liftings. 
In fact, we first wrote \cite[\S3.2]{mintsGE} the simplicial diagram (\ref{SingLift})
when transcribing the definition of a limit of a filter on a topological space in \cite{Bourbaki},
but were sadly unaware of its connection to contractibility before a remark by V.Sosnilo.


\section{Research directions. Test problems.}\label{TestProblems}

We suggest a couple of test problems which might be used to guide development of the simplicial reformulations of model theory.

\subsection{State and prove simplicially the Fundamental Theorem of Stability Theory} 
The Fundamental Theorem of Stability Theory claims equivalence of 
two definitions of stability theory admitting simplicial reformulations:
(*) each type over a set is definable, and (**) no formula has the order property. 
The first definition is what this note is about, and the second defitition 
is quite close to the reformulations in terms of the lifting property
discussed in \cite{Z1,Z2}, esp.~\cite[\S3.3.2]{Z1},\cite[\S17]{Z2} see also \S\ref{sNOP}.
Moreover, it follows from a general theorem about compactness,
namely the Grothendieck's double limit theorem \cite[Thm.6]{Groth}, 
as explained in \cite{BenYaacov,Starchenko}. 
Note that there is a definition of compactness involving diagram similar to that 
used to define definability \cite{L}.

Can one give a purely simplicial or homotopy theoretic proof of this theorem ? 

One immediate difficulty is that it is not quite clear what category 
one should work in: the reformulations in \cite{Z1,Z2} use the category $\sPhi$ of simplicial 
filters and continuous maps (though \S\ref{sNOP} suggests it might be better to use 
almost everywhere continuous maps),
whereas here we use the category $\sTop$ of simplicial topological spaces or its full subcategory $\spfSets$
of simplicial profinite Hausdorff compact spaces. See also speculations in \S\ref{forking-continuity}(Forking as a notion of continuity?).

\subsection{Kim-Pillay characterisation of non-forking in terms of independence relation}
Rewrite simplicially the characterisation of simple theories \cite[\S4, Def.~4.1]{Kim-Pillay} 
and stable theories \cite[Axioms~0-4]{Harnik-Harrington}
in terms of parameter sets and types as morphisms to $\SS^T_\bullet(\emptyset)$.
Doing so appears straightforward, and the difficulty lies in identifying the category-theoretic notion 
that such a translation would lead to. Note that \cite[Def.~2.1]{Lieberman-Rosicky-Vasey} and \cite{Kamsma2} use a different approach 
to reformulate these notions category-theoretically.

\subsubsection{Forking as a notion of continuity?}\label{forking-continuity}
\cite{Harnik-Harrington} observed stability of a first-order theory $T$ can be characterised  in terms of 
a class of distinguished extensions $p\subset q$ of its complete types (here $p\subset q$ means that each formula in $p$ is also in $q$)
over arbitrary sets of parameters, see also \cite[Theorem 8.5.10(Characterisation of Forking)]{TZ}.
They denote this relation
by $p \sqsubset q$ between the complete types of a theory $T$, and the intuition is that $p\sqsubset q$ means that 
$q$ is a {\em free} extension of $p$ to a larger set of parameters. 
They prove that a theory $T$ is stable iff there is a relation $p\sqsubset q$ on its complete types over arbitrary parameters
satisfying the following axioms:
\newline\noindent\includegraphics[test]{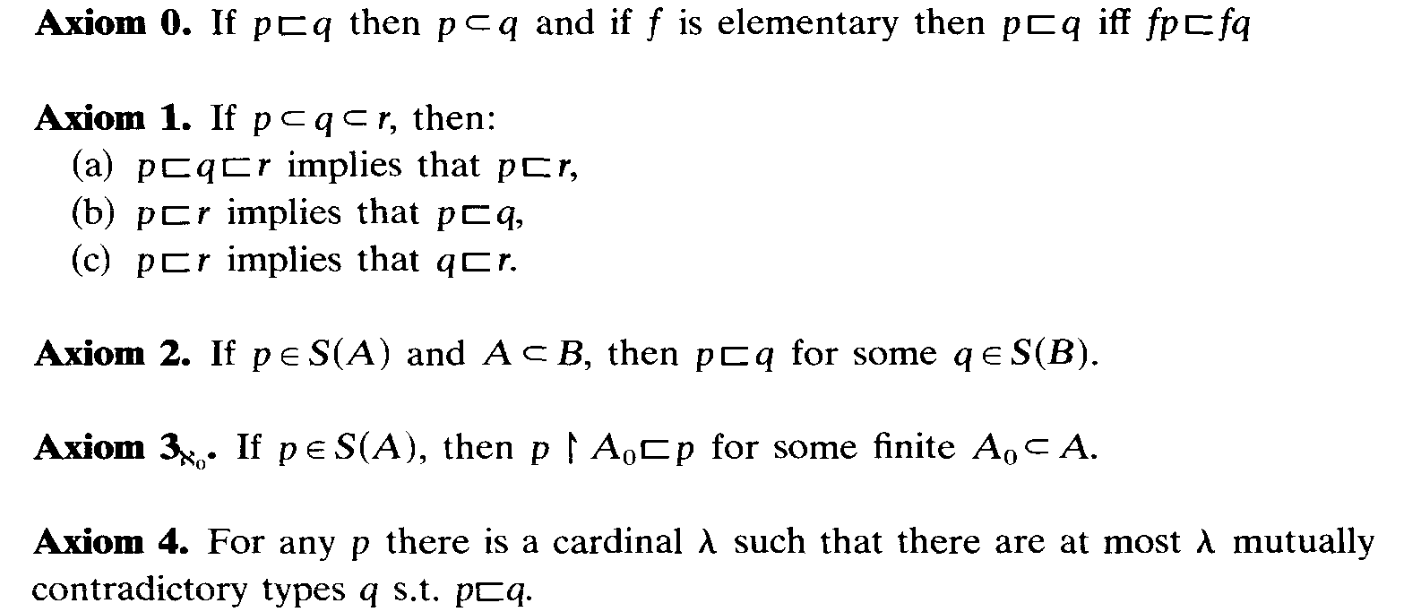}
\newline\noindent\includegraphics[test]{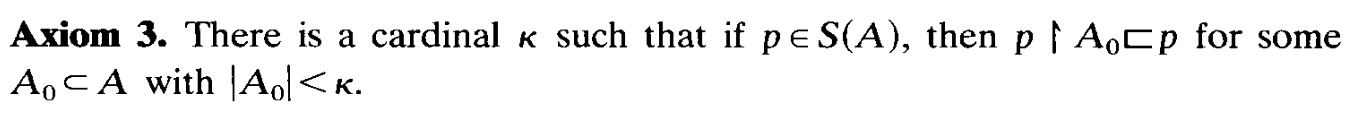}
In simplicial terms, a relation $p\sqsubset q$ is a class of distinguished diagrams in $\sSets$
$$\xymatrix@C=2.239cm{ |A|_\bullet \ar[r]|(0.3)p \ar[d]|{A\subset B} & \SS_\bullet(\emptyset)\circ[+N]\ar[d]|{\pr_{N+1,N+2,...}} \\
|B|_\bullet \ar[r]|{\tp(B/\emptyset)} \ar[ru]|q & \SS_\bullet(\emptyset)}
\xymatrix{ & } 
\xymatrix@C=2.239cm{ |A|_\bullet \ar[r]|(0.3)p \ar[d]|{A\subset B} & \SS_\bullet(B)\circ[+N] \ar[d]|{\pr_{N+1,N+2,...}}  \\
|B|_\bullet \ar[r]|{\tp(B/B)} \ar[ru]|q & \SS_\bullet(B) 
}$$
A notion of continuity defines an extra structure on $\sSets$ which does provide a class of distinguished diagrams, 
and one wonders if this is a useful point view on non-forking. 
A standard way to define something like a topology on a category is provided by the notion of a Grothendieck topology on a category, 
i.e.~a choice of distinguished families $\{f_i:U_i\lra U\}_i$ of morhpisms called {\em coverings}. 
Note that Axiom~3 reminds of accessible categories.

\subsubsection{Stability in terms of an independence relation.}
We have nothing to say but only quote some definitions with a hope that the reader may recognize the diagrams involved.
\cite[Theorem 5.2, cf.~also Theorem 3.2 and Def.~4.1]{Kim-Pillay} characterise the class of simple theories. 
We shall quote the axioms of an independence relations. \cite[Theorem 3.2(Independence Theorem over a model)]{Kim-Pillay} 
states a non-triviality condition of the independence relation characterising simple theories. 
Setting $tp(a/A) \sqsubset tp(a/BA)$ iff $(a,B,A) $ is independent relates the independence relation and the non-forking extensions above.
\newline\noindent\includegraphics[test]{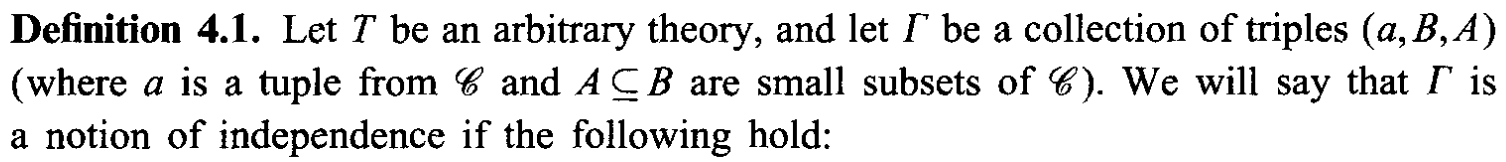}
\newline\noindent\includegraphics[test]{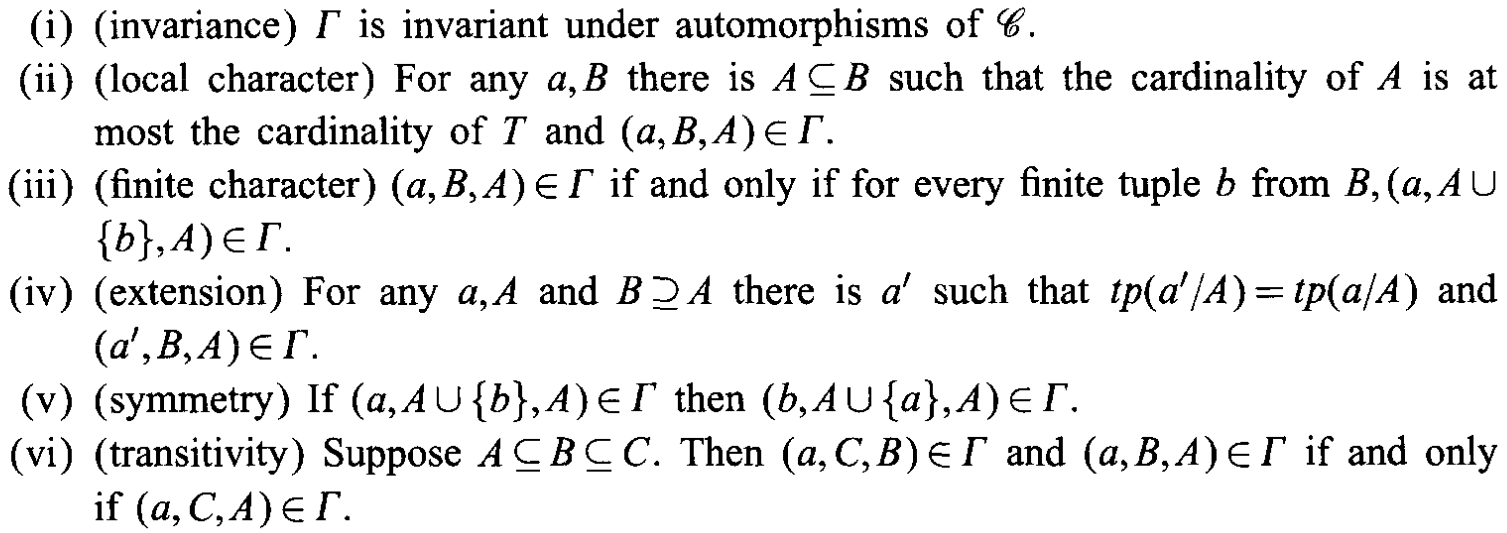}
In simplicial terms, this describes a collection of distinguished diagrams of form
$$\xymatrix{ |a| \vee |B|_\bullet \ar@{<-}[d] \ar@{<-}[r] \ar[dr]  & |a|_\bullet \ar[d] \\
|B|_\bullet \ar[r] & \SS_\bullet(A) }$$

\subsection{Reformulate superstability, NIP, categoricity, simplicity, properties of an independence relation and non-forking, Kim-dividing and Kim-forking, excellence...} 
Reformulate some of the classical theory. In particular, reformulate simplicially model theoretic properties of structures related to combinatorics or algebraic geometry, such as those related to 
Elkes-Szabo or peudoexponentiation.


\section{Appendix. Requests for comments}\label{Questions}

In the appendix I take the liberty to present questions which I would ask in 
a private conversation or email.

\subsection{Requests for comments from a homotopy theorist} 

I do not know much about the simplicial formula (\ref{SBlift}) used to define 
contractibility, convergence, and stability of a theory. 
Essentially, any reference to a general theory would be welcome. 

Does homotopy theory suggest a point of view or technique for dealing with stable theories ? 
Particularly in view of the connection between the Borel construction 
of a group action and the simplicial Stone space
mentioned in Remark~\ref{BGXasSB}. For example, did anyone consider
the Borel construction $\BBB_\bullet(\operatorname{Gal}(\bar{\Bbb Q}/\Bbb Q),\Bbb Q)$
of the Galois action on $\bar{\Bbb Q}$ ?
Probably $\BBB_\bullet(\operatorname{GL}(V), V)$ for a vector space $V$, 
is standard to consider, but how would it relate to the Stone space $\SS^{\operatorname{VectorSpaces}}_\bullet$ of the theory of vector spaces...

I remark that stability (and a number of other properties of theories) 
can be defined by a lifting property with respect to an explicitly given morphism,
in a somewhat similar category $\sPhi$ of simplicial objects in the category $\PPhi$ of filters
\cite{Z1,Z2}.

I should explicitly say that I am no expert in homotopy theory, and solicit collaboration.

\begin{question}[Background on our simplicial formula for contractibility] \begin{itemize} 
	\item Find a good reference discussing this simplicial formula and decalage..
        \item This simplicial formula can be interpreted in the category of simplicial 
		objects of an arbitrary category, and thus defines a notion of a map being contractible.
		Did anyone study this formula as a definition of contractibility ? Is it well-behaved ?
		We do know that in $\sSets$ it does define a standard notion of contractibility 
		for fibrant simplicial sets.
	\item  Is there a similar formula using endomorphisms of $\Delta$ 
		which defines when two maps are homotopy equivalent
		in the category of simplicial objects of an arbitrary category ? 
\end{itemize}	
\end{question}

\subsection{Requests for comments from a model theorist}

Where to take this further ? What notions in model theory look as if they might be added to our little glossary ?  

An obvious wish is to apply methods or intuitions of homotopy theory in model theory. 
Say, make a homotopy theory calculation in model theory.

\subsubsection{Morley sequences as spectra in stable homotopy theory ?} 
Both notions involve a sequence and taking a suspension at each step. Is there any analogy ?

\subsubsection{Does the classifying space  $$\BBB_\bullet(\CCC,\AutCB)$$ appear in model theory?} 
Does Remark~\ref{BGXasSB} provoke any associations in model theory ? 

\subsubsection{A technical question: the same diagram with different notion of continuity} 
\cite{Z1,Z2} show that stability and a number of other notions are defined by lifting properties
not in in the category $\sTop$ of simplicial topological spaces, but in the category $\sPhi$
of simplicial objects in the category of filters. Interpreting (\ref{SBlift}) in that category 
leads to a different property of type which I state below. Is it familiar ? 

\begin{question} Is the following property of types familiar ? 
It seems very much as an analogue of non-dividing but only for elements, not tuples, 
\cite[Cor.7.1.5]{TZ}. Note it is expressed as a lifting property in $\sPhi$ in 
	\cite[5.3.2]{Z1}. 

\begin{itemize}
	\item (oversimplified)
 A global type $p(x/\mathfrak C)$ invariant over $B$ such that
	if $\bar c$ is indiscernible over $B$, then 
	$p(x/\mathfrak C)$ contains all the formulas saying 
	that $\bar c$ is indiscernible over $x$. 
\item  
 A global type $p(x/\mathfrak C)$ invariant over $B$ such that
		\begin{itemize}\item 
for each length $l>0$, for each formula $\varphi(x,\bar y,\bar b)$, 
				$\bar b\subset B$, 
there are finitely many formulas $\psi_i(-,\bar b_i)$, $\bar b_i\subset  B$, $0<i<n$, 
such that
				\begin{itemize}
					\item	for any tuple $\bar c\subset \mathfrak C$ of length $l$, 
	if $\bar c$ is indiscernible wrt each $\psi(-,\bar b_i)$, $0<i<n$, 
	then $p(x/\mathfrak C)$ contains the formula saying that $\bar c$ 
						is indiscernible wrt $\phi(x,-,\bar b)$.
				\end{itemize}\end{itemize}

\item same as above, but instead of indiscernibility wrt finitely many formulas require
	extending to an arbitrary long finite tuple indiscernible wrt finitely many formulas. 
				Thus, it now reads:

 A global type $p(x/\mathfrak C)$ invariant over $B$ such that
		\begin{itemize}\item 
for each lengths $l<l_1$, for each 
 finite set $\Theta$ of formulas over $B$
				there are $l_2>0$ and 
 finite set $\Delta$ of formulas over $B$
				such that
				\begin{itemize}
					\item	for any tuple $\bar c\subset \mathfrak C$ of length $l$, 
	if $\bar c$ extends to some finite  tuple of length $l_2$  indiscernible wrt $\Delta$, 
	then $p(x/\mathfrak C)$ contains the formula saying that $\bar c$ extends to a tuple $\bar c \bar c'$ of length $l_1$
						 indiscernible wrt $\Theta$ 
				\end{itemize}\end{itemize}\end{itemize} 
				\end{question}

\subsubsection{References to simplicial type spaces in model theory?}
The only three references to simplicial type spaces I know,
				are by (Michael Morley. Applications of topology to $L_{\omega_1\omega}$. 1974) \cite{Morley},
and 
				by (Levon Haykazyan.
{\em Spaces of Types in Positive Model Theory}. J. symb. log. 84 (2019) 833-848.) \cite{Levon},
				and (Mark Kamsma. 
				{\em Type space functors and interpretations in positive logic}. 2020).
				The latter two \cite{Levon,Kamsma}
				mention simplicial type spaces under the name of {\em type space functors}
				and consider them in the context of positive logic.
				We particularly draw attention to 
	\cite[\S3(The type space functor and interpretations of theories)]{Levon} 
				and \cite[Defs.~4.19-20]{Kamsma} which I have not yet read.

				\cite{Morley} calls them {\em type structures} associated to an $L_{\omega_1\omega}$-theory, 
				but never uses words ``functor'' or ``category'' explicitly.
				Consider the following wording by Morley used to introduce notions necessary to charactercise simplicial spaces (called ``type structures'') associated with an $L_{\omega_1\omega}$-theories. 
\noindent\newline\includegraphics[test]{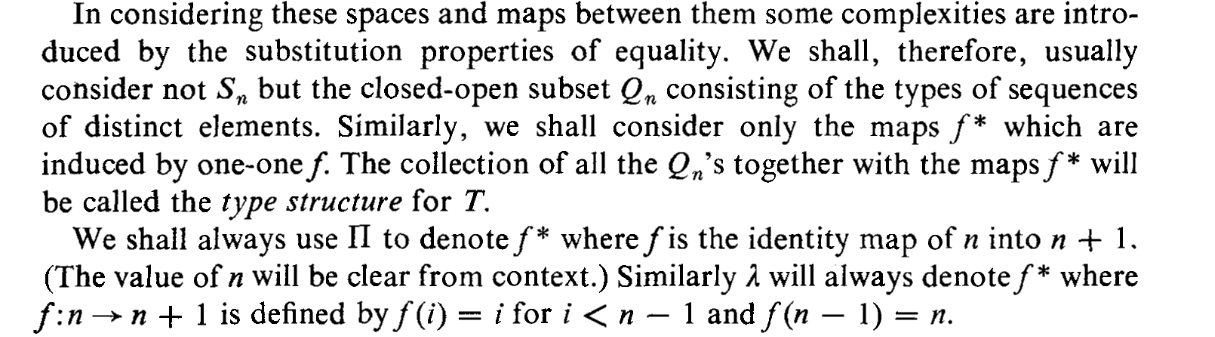}

Is there anything else ? In particular, about interpretations as maps of type spaces.

				\subsubsection{A concise definition of simplicial Stone spaces of types in the category of filters\label{sFdef}\label{A}}  
Let me now define the category $\PPhi$ of filters with continuous maps, 
and the category $\PHi$ of filters with continuous maps defined almost everywhere. 
				\begin{defi}\label{A}
An object of $\PPhi$ is a set equipped with a filter. 
A morphism $f:(X,\mathcal F) \lra (Y,\mathcal G)$ is a map 
$f:X\lra Y$ of the underlying sets such that the preimage of a big set is big, 
i.e.~$\{ f\inv(U): U\in \mathcal G\}\subset \mathcal F$. 
We call such maps of filters {\em continuous}, as it enables us to say 
that a map of topological spaces is continuous iff the induced maps of neighbourhoods filters
are continuous. 

Let $\PHi$ denote a category of filters where morphisms are defined only on big subsets, 
where we identify maps which coincide on a big subset. That is, 
$\PHi$ and $\PPhi$ have the same objects, and in $\PHi$
a morphism $f:(X,\mathcal F) \lra (Y,\mathcal G)$  is a map 
$f:U_X\lra Y$ defined on a big subset $U_X\in \mathcal F$ 
such that the preimage of a big set  is big, 
i.e.~$\{ f\inv(U): U\in \mathcal G\}\subset \mathcal F$. 
Two such morphisms are considered identical iff they coincide on a big subset.
We call such maps of filters {\em continuous defined almost everywhere}.
Note that we may still say 
that a map of topological spaces is continuous iff the induced maps of neighbourhoods filters
are almost everywhere continuous.

\end{defi}
We may consider a type space $S_n(B)$ to be objects of $\PHi$ if we equip $S_n(B)$
with the following {\em indiscernability} filter generated by sets of types containing 
a formula over $B$ of the form
$$\bigwedge\limits_{0<l<k}( x_{i_l}\neq x_{i_{l+1}} \,\&\, x_{j_l}\neq x_{j_{l+1}}) 
\implies ( \varphi(x_{i_1},...,x_{i_k})\llrra \varphi(x_{j_1},...,x_{j_k}) )$$
Perhaps it is more reasonable to define these filters slightly different 
by taking the formulas of the form, for each $k<N$ and a finite collection of formulas $\varphi_s$ over $B$:
$$\bigwedge\limits_{0<l<k}( x_{i_l}\neq x_{i_{l+1}} \,\&\, x_{j_l}\neq x_{j_{l+1}}) 
\implies 
\exists x_{n+1}...x_N 
(\bigwedge\limits_{n<r<s\leqslant N} x_r\neq x_s
\,\,\,\,\&\,$$ $$
\bigwedge\limits_
				{ \substack{ i_k<i_{k+1}<...<i_r\leqslant  N,\\ 
					\\ j_k<j_{k+1}<...<j_r\leqslant N}}
(\bigwedge\limits_s \varphi_s(x_{i_1},...,x_{i_r})\llrra \varphi_s(x_{j_1},...,x_{j_r}) )$$
The formula is meant to say that the tuple $x_1,..,x_n$ can be extended to an arbitrary long
finite tuple indiscernible with respect to arbitrary finitely many formulas over $B$.

\end{document}